\newtheorem{Proposition}{Proposition}
\newtheorem{rmk}{Remark}
\newtheorem{defi}{Definition}
\begin{document}
\title{On the  monotonicity of $Q^2$ spectral element method for Laplacian on quasi-uniform rectangular meshes}


\author{Logan J. Cross\affil{1}  and Xiangxiong Zhang\affil{1}\comma\corrauth,
}
\address{\affilnum{1}\  
         Purdue University,
150 N. University Street,
West Lafayette, IN 47907-2067.}
\emails{{\tt logancross68@gmail.com} (Cross),  
         {\tt zhan1966@purdue.edu} (Zhang)}

\begin{abstract}The monotonicity of discrete Laplacian implies discrete maximum principle, which in general does not hold for high order schemes.  
The $Q^2$ spectral element method  has been proven monotone on a uniform rectangular mesh.  
In this paper we  prove the monotonicity of the $Q^2$ spectral element method  on quasi-uniform rectangular meshes under certain mesh constraints. In particular, we propose a relaxed Lorenz's condition for proving monotonicity.

\end{abstract}

\ams{65N30,   	 	65N06,  	65N12}
\keywords{Inverse positivity, discrete maximum principle, high order accuracy, monotonicity, discrete Laplacian, quasi uniform meshes, spectral element method}

\maketitle

\section{Introduction}

In many applications, monotone discrete Laplacian operators are desired and useful for ensuring stability such as discrete maximum principle or positivity-preserving of physically positive quantities \cite{ciarlet1970discrete, shen2021discrete, hu2021positivity, liu2023positivity}. Let $\Delta_h$ denote the matrix representation of a discrete Laplacian operator, then it is called {\it monotone} if $(-\Delta_h)^{-1}\geq 0$, i.e., the inverse matrix $(-\Delta_h)^{-1}$ has nonnegative entries. In this paper, all inequalities for matrices are entry-wise inequalities. 

In the literature, the most important tool for proving monotonicity is via nonsingular M-matrices, which are inverse-positive matrices. See the Appendix for a convenient characterization of the M-matrices.
The simplest second order accurate centered finite difference $u''(x_i)\approx \frac{u(x_{i-1})-2u(x_i)+u(x_{i+1})}{\Delta x^2}$ is monotone because the corresponding matrix  $(-\Delta_h)^{-1}$ is an M-matrix thus inverse positive.
\textcolor{black}{Even though the linear finite element
method forms an M-matrix on unstructured triangular meshes under a mild mesh constraint \cite{xu1999monotone}, 
in general the discrete maximum principle is not true for high order finite element methods  on unstructured meshes \cite{hohn1981some}.} On the other hand, there exist a few high order accurate inverse positive schemes on structured meshes.

For solving a Poisson equation, \textcolor{black}{provably monotone} high order accurate schemes on structured meshes include  the classical 9-point scheme \cite{krylov1958approximate, collatz1960, bramble1962formulation} in which the stiffness matrix is an M-matrix. The classical 9-point scheme has the same stiffness matrix as fourth order accurate
compact finite difference schemes \cite{li2018high}, see the appendix in \cite{li2023-compact}.
In \cite{bramble1964finite,bramble1964new},  a fourth order accurate finite difference scheme
was constructed and its stiffness matrix is a product of two M-matrices thus monotone. 
The Lagrangian $P^2$  finite element method  on a regular triangular mesh \cite{whiteman1975lagrangian} has a monotone stiffness matrix \cite{lorenz1977inversmonotonie}. On an equilateral triangular mesh, the discrete maximum principle of $P^2$ element can also be proven \cite{hohn1981some}. 
 Monotonicity was also proven  for the $Q^2$ spectral element method on an uniform rectangular mesh  for a variable coefficient Poisson equation under suitable mesh constraints \cite{li2019monotonicity}. The $Q^k$ spectral element method is the continuous finite element method with Lagrangian $Q^k$ basis implemented by $(k+1)$-point Gauss-Lobatto quadrature. 
 The monotonicity of $Q^3$ spectral element method for Laplacian on uniform meshes was also proven in \cite{cross2020monotonicity}.

For proving inverse positivity, the main viable tool in the literature is to use M-matrices which are inverse positive.
A convenient sufficient condition for verifying the M-matrix structure is to require that off-diagonal entries must be non-positive. Except the fourth order compact finite difference,
all high order accurate schemes induce positive off-diagonal entries, destroying M-matrix structure, which is a major challenge of proving monotonicity. In  \cite{bramble1964new} and \cite{bohl1979inverse}, and also the appendix  in \cite{li2019monotonicity}, M-matrix factorizations of the form $(-\Delta_h)^{-1}=M_1M_2$ were shown for special high order schemes but these M-matrix factorizations seem ad hoc and do not apply to other schemes or other equations. 
  In \cite{lorenz1977inversmonotonie},
Lorenz proposed some matrix entry-wise inequality for ensuring a matrix to be a product of two M-matrices and applied it to $P^2$ finite element method on uniform regular triangular meshes.

In  \cite{li2019monotonicity}, Lorenz's condition was applied to  $Q^2$ spectral element method on uniform rectangular meshes. Such a monotonicity result implies that the $Q^2$ spectral element method is bound-preserving or positivity-preserving for convection diffusion equations including 
the Allen-Cahn equation
\cite{shen2021discrete}, the Keller-Segel equation \cite{hu2021positivity}, the Fokker-Planck equation \cite{liu2022structure}, as well as the internal energy equation in compressible Navier-Stokes system \cite{liu2023positivity}. On the other hand, all these results about $Q^2$ spectral element method are on uniform meshes. For both theoretical and practical interests, a natural question to ask is whether such a monotonicity result still holds on non-uniform meshes. The monotonicity of high order schemes on  quasi-uniform meshes \textcolor{black}{are preferred} in many applications, e.g., \cite{sulman2019positivity}.

The focus of this paper is to discuss Lorenz's condition for $Q^2$ spectral element method on quasi-uniform meshes. 
We
discuss and derive sufficient mesh constraints to preserve monotonicity of  $Q^2$ spectral element method on a quasi-uniform rectangular mesh. In general, the same discussion also applies to Lagrangian $P^2$ finite element method on a quasi-uniform regular triangular mesh, 
 but there does not seem to be any advantage of using  $P^2$.  

For simplicity, we will focus only on Dirichlet boundary conditions. For Neumann boundary conditions, the discussion of \textcolor{black}{monotonicity} is very similar, e.g., see \cite{hu2021positivity, liu2022structure} for discussion on Neumann boundaries. 
 
The rest of the paper is organized as follows. In Section \ref{sec:sem}, we briefly review the $Q^2$ spectral element method and its equivalent finite difference form for the Poisson equation.  
In Section \ref{sec-lorenz}, we review the Lorenz's condition for proving monotonicity and propose a relaxed version of Lorenz's condition. \textcolor{black}{Though we only focus on $Q^2$ spectral element method on quasi-uniform meshes for Laplacian in this paper, the proposed relaxed Lorenz's condition may also be used to derive monotonicity under more relaxed mesh constraints for $Q^2$ spectral element method solving variable coefficient problems such as those in \cite{li2019monotonicity, hu2021positivity, liu2022structure}. } 
In Section \ref{sec:main}, we prove the monotonicity of $Q^2$ spectral element method  on a quasi-uniform mesh by using the relaxed Lorenz's condition.  
Numerical tests of accuracy of the scheme and necessity of the  mesh constraints for monotonicity are given in Section \ref{sec:5}. Section \ref{sec:remark} are concluding remarks.

 \section{$Q^2$ spectral element method}
 \label{sec:sem}
 
 \subsection{Finite element method with the simplest quadrature}
 Consider an elliptic equation on $\Omega=(0,1)\times(0,1)$ with 
Dirichlet boundary conditions:
\begin{equation}
\label{pde-3}
 \mathcal L u\equiv -\nabla\cdot(a \nabla u)+cu =f  \quad \mbox{on} \quad \Omega, \quad
 u=g \quad \mbox{on}\quad  \partial\Omega. 
\end{equation}
Assume there is a function $\bar g \in H^1(\Omega)$ as an extension of $g$ so that $\bar g|_{\partial \Omega} = g$.
 The  variational form  of \eqref{pde-3} is to find $\tilde u = u - \bar g \in H_0^1(\Omega)$ satisfying
\begin{equation}\label{nonhom-var}
 \mathcal A(\tilde u, v)=(f,v) - \mathcal A(\bar g,v) ,\quad \forall v\in H_0^1(\Omega),
 \end{equation}
 where $\mathcal A(u,v)=\iint_{\Omega} a \nabla u \cdot \nabla v dx dy+\iint_{\Omega} c u v dx dy$, $ (f,v)=\iint_{\Omega}fv dxdy.$

   \begin{figure}[h]
 \subfigure[The  quadrature points and a finite element mesh]{\includegraphics[scale=0.8]{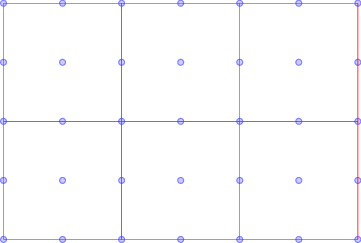} }
 \hspace{.6in}
 \subfigure[The corresponding finite difference grid]{\includegraphics[scale=0.8]{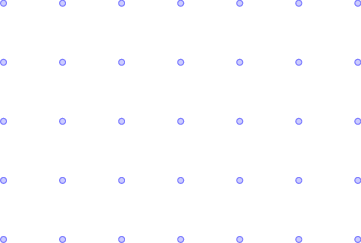}}
\caption{An illustration of Lagrangian $Q^2$ element and the $3\times3$ Gauss-Lobatto quadrature. }
\label{mesh-Q2}
 \end{figure}

 Let $h$ be quadrature point spacing of   a  rectangular mesh shown in Figure \ref{mesh-Q2} and  $V_0^h\subseteq H^1_0(\Omega)$ be the continuous finite element space consisting of $Q^2$ polynomials, then the most convenient implementation of  finite element method is to use the simple quadrature consisting of   $3\times3$ Gauss-Lobatto quadrature rule  for all the integrals,  see    Figure \ref{mesh-Q2} for $Q^2$ method. 
 Such a numerical scheme can be defined as:  find $ u_h \in V_0^h$ satisfying
\begin{equation}\label{nonhom-var-num3}
\mathcal A_h( u_h, v_h)=\langle f,v_h \rangle_h - \mathcal A_h( g_I,v_h) ,\quad \forall v_h\in V_0^h,
\end{equation}
where   $\mathcal A_h(u_h,v_h)$  and $\langle f,v_h\rangle_h$ denote using simple quadrature for integrals $\mathcal A(u_h,v_h)$ and $(f,v_h)$ respectively, and $g_I$ is the piecewise   $Q^2$ Lagrangian interpolation polynomial at the quadrature points shown   Figure \ref{mesh-Q2}  of the following function:
\[g(x,y)=\begin{cases}
   0,& \mbox{if}\quad (x,y)\in (0,1)\times(0,1),\\
   g(x,y),& \mbox{if}\quad (x,y)\in \partial\Omega.\\
  \end{cases}
\] 

Then $\bar u_h =   u_h + g_I$ is the numerical solution for the problem \eqref{pde-3}.
Notice that \eqref{nonhom-var-num3} is not a straightforward approximation to \eqref{nonhom-var} since $\bar g$ is never used. 
When the numerical solution is represented by a linear combination of Lagrangian interpolation polynomials at the grid points, it can be rewritten as a finite difference scheme.
We can also call it a variational difference scheme since it is derived from the
variational form. 
  
\subsection{The difference formulation}
The   scheme \eqref{nonhom-var-num3} with Lagrangian $Q^2$ basis can also be written as  a finite difference scheme \cite{li2019fourth}.

 Consider a uniform grid $(x_i,y_j)$ for a rectangular domain $[0,1]\times[0,1]$
where
$x_i = ih$, $i = 0,1,\dots, n+1$
and
$y_j = jh$, $j = 0,1,\dots, n+1$, $h=\frac{1}{n+1}$, where $n$ must be odd.
Let $u_{ij}$ denote the numerical solution at $(x_i, y_j)$.
Let $\mathbf u$ denote an abstract vector consisting of $u_{ij}$ for $i,j=1,2,\cdots,n$. Let $\bar{\mathbf u}$ denote an abstract vector consisting of $u_{ij}$ for $i,j=0,1,2,\cdots,n,n+1$.
Let $\bar{\mathbf f}$ denote an abstract vector consisting of $f_{ij}$ for $i,j=1,2,\cdots,n$ and the boundary condition $g$ at the boundary grid points. 
Then the matrix vector representation of  \eqref{nonhom-var-num3} is
 $S\bar{\mathbf u}=M\mathbf f$ where $S$ is the stiffness matrix and $M$ is the lumped mass matrix.  
For convenience,  after inverting the mass matrix, with the boundary conditions, the whole scheme can be represented in a matrix vector form 
$\bar L_h \bar{\mathbf u} =\bar{\mathbf f}$.
For Laplacian $\mathcal L u=-\Delta u$,  $\bar L_h \bar{\mathbf u} =\bar{\mathbf f}$ on a uniform mesh is given as 
\begin{equation}
\label{Q2-laplacian}
\resizebox{\textwidth}{!} 
{$ 
    \begin{gathered}
(\bar L_h \bar{\mathbf u})_{i,j}:=\frac{-u_{i-1,j}-u_{i+1,j}+4u_{i,j}-u_{i,j+1}-u_{i+1,j}}{h^2}=f_{i,j},\quad \text{if $(x_i,y_j)$ is a cell center}, \\
(\bar L_h \bar{\mathbf u})_{i,j}:=\frac{-u_{i-1,j}+2u_{i,j}-u_{i+1,j}}{h^2}+\frac{u_{i,j-2}-8u_{i,j-1}+14u_{i,j}-8u_{i,j+1}+u_{i,j+2}}{4h^2}=f_{i,j},
\\ \text{if $(x_i,y_j)$ is an edge center for an edge parallel to the \textcolor{black}{x-axis},}
\\ 
(\bar L_h \bar{\mathbf u})_{i,j}:=\frac{u_{i-2,j}-8u_{i-1,j}+14u_{i,j}-8u_{i+1,j}+u_{i+2,j}}{4h^2}+\frac{-u_{i,j-1}+2u_{i,j}-u_{i,j+1}}{h^2}=f_{i,j},
\\ \text{if $(x_i,y_j)$ is an edge center for an edge parallel to the \textcolor{black}{y-axis},}
\\  
(\bar L_h \bar{\mathbf u})_{i,j}:=\frac{u_{i-2,j}-8u_{i-1,j}+14u_{i,j}-8u_{i+1,j}+u_{i+2,j}}{4h^2}+\frac{u_{i,j-2}-8u_{i,j-1}+14u_{i,j}-8u_{i,j+1}+u_{i,j+2}}{4h^2}=f_{i,j}, \\
 \text{if $(x_i,y_j)$ is a knot,}\\
(\bar L_h \bar{\mathbf u})_{i,j}:=u_{i,j}=g_{i,j}\quad \text{if $(x_i,y_j)$ is a boundary point.}
    \end{gathered}$}
\end{equation}
If ignoring the denominator $h^2$, then the stencil can be represented as:
 \[ \quad \mbox{  cell center}  \begin{array}{ccc}
& -1& \\
-1 & 4 & -1\\
& -1& 
\end{array}\qquad
\mbox{knots}  \begin{array}{ccccc}
&& \frac14& &\\
&& -2& &\\
\frac14& -2 & 7 & -2 &\frac14\\
&& -2& &\\
&& \frac14& &
\end{array}
\]
\[
\mbox{edge center (edge parallel to $y$-axis)}  \begin{array}{ccccc}
&& -1& &\\
\frac14& -2 & \frac{11}{2} & -2 &\frac14\\
&& -1& &
\end{array}\]
\[
\mbox{edge center (edge parallel to $x$-axis)}  \begin{array}{ccccc}
&& \frac14& &\\
&& -2& &\\
& -1 & \frac{11}{2} & -1 &\\
&& -2& &\\
&& \frac14& &
\end{array}.\]
 \begin{remark}
When regarded as a finite difference scheme, the   scheme \eqref{nonhom-var-num3}   is  fourth order accurate  in $\ell^2$-norm for elliptic, parabolic, wave and Schr\"odinger equations \cite{li2019fourth, MR4378595}. 
 \end{remark}

\section{Lorenz's condition for monotonicity} 
\label{sec-lorenz}
 
In this section, we first review the Lorenz's method for proving monotonicity \cite{lorenz1977inversmonotonie}, then present a relaxed Lorenz's condition. The definition of M-matrices is given in the appendix. 

\subsection{Discrete maximum principle} 
\label{sec-dmp}
We first review how the monotonicity implies the discrete maximum principle for a boundary value problem. 
For a finite difference scheme, assume there are $N$ grid points in the domain $\Omega$ and $N^\partial$ boundary grid points on 
$\partial\Omega$.
Define
$$\mathbf u=\begin{pmatrix}
u_1&   \cdots& u_N             
            \end{pmatrix}^T,  \mathbf u^\partial=\begin{pmatrix}
u^\partial_1&   \cdots& u^\partial_{N^\partial}             
            \end{pmatrix}^T, \tilde{\mathbf u}=\begin{pmatrix}
u_1&   \cdots& u_N & u^\partial_1&  \cdots& u^\partial_{N^\partial}            
            \end{pmatrix}^T.$$
 A finite difference scheme can be written as 
 \begin{align*}
\mathcal L_h (\tilde{\mathbf u})_i=\sum_{j=1}^N b_{ij}u_j+\sum_{j=1}^{N^\partial}  b^\partial_{ij}u^\partial_j=&f_i,\quad 1\leq i\leq N,\\
  u^\partial_i=&g_i, \quad 1\leq i\leq N^\partial.  
 \end{align*}
 The matrix form is 
 \[\tilde L_h \tilde{\mathbf u}=\tilde{\mathbf f},\tilde L_h =\begin{pmatrix}
                       L_h & B^\partial\\
                       0 & I
                      \end{pmatrix}, \tilde{\mathbf u}=\begin{pmatrix}
                       \mathbf u\\ \mathbf u^\partial \end{pmatrix},
                       \tilde{\mathbf f}=\begin{pmatrix}
                       \mathbf f\\ \mathbf g \end{pmatrix}.
\]
The discrete maximum principle is 
\begin{equation}
\mathcal L_h (\tilde{\mathbf u})_i\leq 0, 1\leq i \leq N\Longrightarrow \max_i u_i\leq \max\{0, \max_i u_i^\partial\},  
\label{dmp}
\end{equation}
which implies 
\[\mathcal L_h (\tilde{\mathbf u})_i= 0, 1\leq i \leq N\Longrightarrow |u_i|\leq \max_i |u_i^\partial|.   \]

 The following result was proven in \cite{ciarlet1970discrete}:
\begin{theorem}
 A finite difference operator $\mathcal L_h$ satisfies the discrete maximum principle 
 \eqref{dmp} if  $\tilde L_h^{-1}\geq 0$ and  all row sums of $\tilde L_h$ are non-negative. 
 \end{theorem}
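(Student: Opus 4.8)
The plan is a comparison argument against the constant vector, which is the standard route for such ``algebraic'' maximum principles. Set $M:=\max\{0,\max_i u_i^\partial\}$, so $M\ge 0$, and let $\mathbf 1$ be the all-ones vector of length $N+N^\partial$. It suffices to prove $u_i\le M$ for every interior index $1\le i\le N$; taking the maximum over $i$ then gives exactly \eqref{dmp}. Note that the hypothesis $\tilde L_h^{-1}\ge 0$ tacitly includes that $\tilde L_h$ is invertible, which is all the solvability we need.

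First I would introduce $\mathbf w:=\tilde{\mathbf u}-M\mathbf 1$ and compute $\tilde L_h\mathbf w=\tilde L_h\tilde{\mathbf u}-M\,\tilde L_h\mathbf 1=\tilde{\mathbf f}-M\,\tilde L_h\mathbf 1$, where $\tilde{\mathbf f}=(\mathbf f,\mathbf g)^T$ with $f_i=\mathcal L_h(\tilde{\mathbf u})_i$ and $g_k=u_k^\partial$. Then I would check that $\tilde L_h\mathbf w\le 0$ entrywise by splitting into the two blocks of $\tilde L_h=\begin{pmatrix}L_h & B^\partial\\ 0 & I\end{pmatrix}$. For an interior row $i\le N$: the $i$-th entry of $\tilde{\mathbf f}$ is $\mathcal L_h(\tilde{\mathbf u})_i\le 0$ by assumption, while $(\tilde L_h\mathbf 1)_i\ge 0$ is precisely the nonnegative-row-sum hypothesis, and $M\ge 0$; hence $(\tilde L_h\mathbf w)_i=f_i-M(\tilde L_h\mathbf 1)_i\le 0$. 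For a boundary row $N+k$: the block $\begin{pmatrix}0 & I\end{pmatrix}$ gives $(\tilde L_h\tilde{\mathbf u})_{N+k}=u_k^\partial$ and $(\tilde L_h\mathbf 1)_{N+k}=1$, so $(\tilde L_h\mathbf w)_{N+k}=u_k^\partial-M\le 0$ by the definition of $M$.

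Finally I would invert: $\mathbf w=\tilde L_h^{-1}\big(\tilde L_h\mathbf w\big)$, and since $\tilde L_h^{-1}\ge 0$ acts on the nonpositive vector $\tilde L_h\mathbf w$, the product is $\le 0$. Thus $w_i=u_i-M\le 0$ for all $i$, in particular for $1\le i\le N$, giving $u_i\le M=\max\{0,\max_i u_i^\partial\}$ as claimed.

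The argument is genuinely short, and there is no real obstacle: the only step requiring thought is choosing the constant comparison vector $M\mathbf 1$ and recognizing that the nonnegative-row-sum condition is exactly what keeps the interior residual $\tilde{\mathbf f}-M\,\tilde L_h\mathbf 1$ nonpositive, after which inverse-positivity transfers the sign to $\mathbf u$ itself; the boundary block is pure bookkeeping. If one wanted a \emph{quantitative} estimate rather than the qualitative principle stated here, the difficulty would shift to bounding $\|\tilde L_h^{-1}\|$, but no such bound is needed for \eqref{dmp}.
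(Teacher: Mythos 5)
Your argument is correct: the comparison vector $M\mathbf 1$ with $M=\max\{0,\max_i u_i^\partial\}$, the blockwise check that $\tilde L_h(\tilde{\mathbf u}-M\mathbf 1)\le 0$ using the sign hypothesis on the interior rows and the definition of $M$ on the boundary rows, and the final application of $\tilde L_h^{-1}\ge 0$ are all sound. The paper does not reproduce a proof of this theorem but cites it to Ciarlet (1970), and your argument is precisely the classical comparison-with-constants proof given there, so there is nothing further to reconcile.
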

With the same $\bar L_h$ as defined in the previous section, it suffices to have $\bar{L}_h^{-1}\geq 0$, see  \cite{li2019monotonicity}:
\begin{theorem}
\label{theorem-fullandsmallmatrix}
 If $\bar{L}_h^{-1}\geq 0$, then $\tilde L_h^{-1}\geq 0 $ thus ${L}_h^{-1}\geq 0$. Moreover, if row sums of $\bar{L}_h$ are non-negative, then the finite difference operator $\mathcal L_h$ satisfies the discrete maximum principle.
\end{theorem}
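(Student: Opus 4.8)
The plan is to reduce the whole statement to two elementary facts: that $\bar L_h$ and $\tilde L_h$ are literally the same matrix after relabeling the grid points, and that a block upper-triangular matrix whose lower-right corner is the identity has an explicitly computable inverse.

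First I would make precise the identification of $\bar L_h$ with $\tilde L_h$. Assembled with the grid points in their natural lexicographic ordering and with the trivial rows $u_{i,j}=g_{i,j}$ at boundary points, $\bar L_h$ coincides with $\tilde L_h$ up to a permutation: there is a permutation matrix $P$, reordering the grid so that the $N$ interior points come first and the $N^\partial$ boundary points come last, with $\tilde L_h = P^{T}\bar L_h P$. Indeed, every row of $\bar L_h$ attached to a boundary point is a row of the identity, and every row attached to an interior point, after collecting the stencil coefficients that hit interior neighbours into a row of $L_h$ and those that hit boundary neighbours into a row of $B^\partial$, is exactly a row of $\bigl(\,L_h\ \ B^\partial\,\bigr)$; this splitting is legitimate because in \eqref{Q2-laplacian} every stencil point lies in $\{0,1,\dots,n+1\}^2$, i.e.\ is a genuine interior or boundary grid point and never a fictitious exterior point. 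Since conjugation by a permutation only relabels entries, $\tilde L_h^{-1}=P^{T}\bar L_h^{-1}P$, so the hypothesis $\bar L_h^{-1}\ge 0$ immediately gives $\tilde L_h^{-1}\ge 0$.

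Next, from the block form $\tilde L_h=\begin{pmatrix}L_h & B^\partial\\ 0 & I\end{pmatrix}$ I would note that $L_h$ is nonsingular (a block triangular matrix is invertible iff its diagonal blocks are) and
\begin{equation*}
\tilde L_h^{-1}=\begin{pmatrix}L_h^{-1} & -L_h^{-1}B^\partial\\ 0 & I\end{pmatrix},
\end{equation*}
so $L_h^{-1}$ is the leading principal block of $\tilde L_h^{-1}$, whence $\tilde L_h^{-1}\ge 0$ forces $L_h^{-1}\ge 0$. Finally, for the discrete maximum principle I would apply the preceding theorem of \cite{ciarlet1970discrete} to $\tilde L_h$: its inverse-positivity hypothesis has just been verified, and its row-sum condition follows from the assumption on $\bar L_h$ because $\tilde L_h\mathbf 1 = P^{T}\bar L_h P\mathbf 1 = P^{T}\bar L_h\mathbf 1$ (using $P\mathbf 1=\mathbf 1$), so the row sums of $\tilde L_h$ are just a reordering of those of $\bar L_h$ and hence nonnegative. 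This gives \eqref{dmp}. There is no genuine obstacle in this argument — it is bookkeeping — and the only point deserving care is the identification of $\bar L_h$ with $\tilde L_h$, and of the interior block of $\bar L_h$ with $L_h$, i.e.\ verifying that the $Q^2$ stencils in \eqref{Q2-laplacian} never reference a point outside the closed grid.
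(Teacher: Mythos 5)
Your proof is correct and is essentially the standard argument (the paper itself defers the proof to \cite{li2019monotonicity}, where the same reasoning is used): identify $\tilde L_h$ with $\bar L_h$ via a permutation of grid points, read off $L_h^{-1}$ as the leading block of the block-triangular inverse, and invoke the Ciarlet criterion with row sums preserved under the permutation. Your added check that the $Q^2$ stencils never reference points outside $\{0,\dots,n+1\}^2$ is the right detail to verify and holds here.
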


Let $\mathbf 1$ be an abstract vector of the same shape as $\bar{\mathbf u}$ with all ones. For the $Q^2$  spectral element method, we have that $(\bar L_h \mathbf 1)_{i,j}=1$ if $(x_i, y_j)\in \partial \Omega$ and  $(\bar L_h\mathbf 1)_{i,j}=0$ if $(x_i, y_j)\in \Omega$, which implies the row sums of $\bar{L}_h$ are non-negative. Thus from now on, we only need to discuss the monotonicity of the matrix $\bar{L}_h$.

\subsection{Lorenz's sufficient condition for monotonicity}
\label{sec-lorenz-2}

\begin{defi}
Let $\mathcal N = \{1,2,\dots,n\}$. For $\mathcal N_1, \mathcal N_2 \subset \mathcal N$, we say a matrix $A$ of size $n\times n$ connects $\mathcal N_1$ with $\mathcal N_2$ if 
\begin{equation}
\forall i_0 \in \mathcal N_1, \exists i_r\in \mathcal N_2, \exists i_1,\dots,i_{r-1}\in \mathcal N \quad \mbox{s.t.}\quad  a_{i_{k-1}i_k}\neq 0,\quad k=1,\cdots,r.
\label{condition-connect}
\end{equation}
If perceiving $A$ as a directed graph adjacency matrix of vertices labeled by $\mathcal N$, then \eqref{condition-connect} simply means that there exists a directed path from any vertex in $\mathcal N_1$ to at least one vertex in $\mathcal N_2$.  
In particular, if $\mathcal N_1=\emptyset$, then any matrix $A$  connects $\mathcal N_1$ with $\mathcal N_2$.
\end{defi}

Given a square matrix $A$ and a column vector $\mathbf x$, we define
\[\mathcal N^0(A\mathbf x)=\{i: (A\mathbf x)_i=0\},\quad 
\mathcal N^+(A\mathbf x)=\{i: (A\mathbf x)_i>0\}.\]

Given a matrix $A=[a_{ij}]\in \mathbbm{R}^{n\times n}$, define its diagonal, off-diagonal, positive and negative off-diagonal parts as $n\times n$ matrices $A_d$, $A_a$, $A_a^+$, $A_a^-$:
\[(A_d)_{ij}=\begin{cases}
a_{ii}, & \mbox{if} \quad i=j\\
0, & \mbox{if} \quad  i\neq j
\end{cases}, \quad A_a=A-A_d,
\]
\[(A_a^+)_{ij}=\begin{cases}
a_{ij}, & \mbox{if} \quad a_{ij}>0,\quad i\neq j\\
0, & \mbox{otherwise}.
\end{cases}, \quad A_a^-=A_a-A^+_a.
\]

The following two results were proven in  \cite{lorenz1977inversmonotonie}. See also \cite{li2019monotonicity} for a detailed proof.
\begin{theorem}\label{thm2}
If $A\leq M_1M_2\cdots M_k L$ where $M_1, \cdots, M_k$ are nonsingular M-matrices and $L_a\leq 0$,  and there exists a nonzero vector $\mathbf e\geq 0$ such that \textcolor{black}{$A\mathbf e\geq 0$ and} one of the matrices $M_1, \cdots, M_k , L$ connects $\mathcal N^0(A\mathbf e)$ with $\mathcal N^+(A\mathbf e)$. Then $M_k^{-1}M_{k-1}^{-1}\cdots M_1^{-1} A$ is an M-matrix, thus $A$ is a product of $k+1$ nonsingular M-matrices and $A^{-1}\geq 0$. 
\end{theorem}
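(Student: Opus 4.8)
The plan is to argue by induction on $k$, peeling off the leftmost factor $M_1$ at each step and tracking how the index sets $\mathcal N^0(\cdot)$ and $\mathcal N^+(\cdot)$ behave. For the base case $k=0$, the hypothesis reduces to $A\le L$ with $L_a\le 0$ and $L$ connecting $\mathcal N^0(A\mathbf e)$ with $\mathcal N^+(A\mathbf e)$, for some nonzero $\mathbf e\ge 0$ with $A\mathbf e\ge 0$. First I would observe that $A\le L$ forces $A_a\le L_a\le 0$, so every nonzero off-diagonal entry of $L$ is also a nonzero off-diagonal entry of $A$; since a connecting path may always be chosen without self-loops, $A$ itself connects $\mathcal N^0(A\mathbf e)$ with $\mathcal N^+(A\mathbf e)$. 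Then $A$ is a nonsingular M-matrix directly by the characterization of nonsingular M-matrices recalled in the appendix.

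For the inductive step I would set $B:=M_1^{-1}A$. As $M_1$ is a nonsingular M-matrix, $M_1^{-1}\ge 0$, so left-multiplying the assumed inequality and $A\mathbf e\ge 0$ by $M_1^{-1}$ gives $B\le M_2\cdots M_kL$ and $B\mathbf e\ge 0$. Using that $(M_1^{-1})_{ii}>0$ for every $i$ (a standard property of nonsingular M-matrices), one obtains $(A\mathbf e)_i>0\Rightarrow(B\mathbf e)_i>0$, i.e.\ $\mathcal N^+(A\mathbf e)\subseteq\mathcal N^+(B\mathbf e)$ and, on complementing in $\mathcal N$, $\mathcal N^0(B\mathbf e)\subseteq\mathcal N^0(A\mathbf e)$. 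If some one of $M_2,\dots,M_k,L$ connects $\mathcal N^0(B\mathbf e)$ with $\mathcal N^+(B\mathbf e)$, then the induction hypothesis applies to $B$: it gives that $M_k^{-1}\cdots M_2^{-1}B=M_k^{-1}\cdots M_1^{-1}A$ is an M-matrix and that $B$ is a product of $k$ nonsingular M-matrices, whence $A=M_1B$ is a product of $k+1$ nonsingular M-matrices and $A^{-1}\ge 0$. When the matrix connecting $\mathcal N^0(A\mathbf e)$ with $\mathcal N^+(A\mathbf e)$ is one of $M_2,\dots,M_k,L$, the two set inclusions above transfer the connectivity to $\mathcal N^0(B\mathbf e),\mathcal N^+(B\mathbf e)$ at once.

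The step I expect to be the main obstacle is the remaining case, where the connecting matrix is the very factor $M_1$ being removed. Here my plan is to prove the stronger conclusion $\mathcal N^0(B\mathbf e)=\emptyset$, i.e.\ $B\mathbf e>0$, so that any surviving factor (say $L$) trivially connects $\emptyset$ with $\mathcal N^+(B\mathbf e)$. This relies on an auxiliary fact I would prove first: if $M$ is a nonsingular M-matrix and there is a directed path $i_0\to i_1\to\cdots\to i_r$ with distinct consecutive vertices and $M_{i_{k-1}i_k}\ne 0$ for each $k$, then $(M^{-1})_{i_0 i_r}>0$. Indeed, writing $M=sI-P$ with $P\ge 0$ and $\rho(P)<s$, so $M^{-1}=s^{-1}\sum_{m\ge 0}(P/s)^m$, the single term $m=r$ already contributes the strictly positive product $P_{i_0i_1}\cdots P_{i_{r-1}i_r}$ (each factor equals $-M_{i_{k-1}i_k}>0$), and all remaining terms are nonnegative. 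Applying this to $M_1$: for any $i_0\in\mathcal N^0(B\mathbf e)\subseteq\mathcal N^0(A\mathbf e)$ there is such a path in $M_1$ ending at some $i_r\in\mathcal N^+(A\mathbf e)$, hence $(B\mathbf e)_{i_0}\ge(M_1^{-1})_{i_0 i_r}(A\mathbf e)_{i_r}>0$, contradicting $i_0\in\mathcal N^0(B\mathbf e)$; so $\mathcal N^0(B\mathbf e)=\emptyset$ and the induction closes. A minor point still to check carefully is the base-case reduction to the appendix characterization, together with the harmless removal of self-loops from connecting paths.
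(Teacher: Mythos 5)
The paper does not prove Theorem \ref{thm2} itself; it defers to Lorenz (1977) and to Theorem 7 of the reference \cite{li2019monotonicity}, whose proof (as one can infer from the paper's proof of Proposition 1) works with $M^{-1}=M_k^{-1}\cdots M_1^{-1}$ all at once, shows $M^{-1}A\mathbf e\geq cA\mathbf e$ (and $M^{-1}A\mathbf e>0$ when some $M_i$ connects), and then invokes a key lemma (``Theorem 6'' of that reference) on $M^{-1}A\leq L$. Your induction on $k$ is a clean repackaging of the same ideas: the inductive step is correct, and your auxiliary fact $(M^{-1})_{i_0i_r}>0$ along an off-diagonal path, proved via the Neumann series $M^{-1}=s^{-1}\sum_m (P/s)^m$, handles the ``connecting matrix is $M_1$'' case exactly as the reference does. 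The inclusions $\mathcal N^+(A\mathbf e)\subseteq\mathcal N^+(B\mathbf e)$ and $\mathcal N^0(B\mathbf e)\subseteq\mathcal N^0(A\mathbf e)$, and the removal of self-loops, are all handled correctly.

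The genuine gap is the base case. You assert that once $A$ itself is known to satisfy $A_a\leq 0$, $A\mathbf e\geq 0$ for a nonzero $\mathbf e\geq 0$, and $A$ connects $\mathcal N^0(A\mathbf e)$ with $\mathcal N^+(A\mathbf e)$, then ``$A$ is a nonsingular M-matrix directly by the characterization of nonsingular M-matrices recalled in the appendix.'' It is not direct. Theorem \ref{rowsumcondition-thm} requires non-negative \emph{row sums}, i.e.\ $A\mathbf 1\geq 0$, which is not among your hypotheses; Theorem \ref{rowsumcondition-thm2} requires producing a \emph{positive} diagonal matrix $D$ with $AD\mathbf 1>0$, i.e.\ a strictly positive vector $\mathbf d$ with $A\mathbf d>0$, whereas you only have a vector $\mathbf e$ that may have zero entries and for which $A\mathbf e$ may vanish on $\mathcal N^0(A\mathbf e)$. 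Bridging this — perturbing $\mathbf e$ to a strictly positive $\mathbf d$ with $A\mathbf d>0$, using the connectivity to propagate strict positivity backwards along the paths into $\mathcal N^0(A\mathbf e)$ without destroying positivity elsewhere, and verifying along the way that the diagonal entries of $A$ are positive — is precisely the content of Lorenz's key lemma (Theorem 6 in \cite{li2019monotonicity}), and it is the only nontrivial analytic ingredient of the whole theorem. Your induction therefore rests on an unproven floor: either prove this lemma explicitly or cite it as an external result rather than as a consequence of the appendix.
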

\begin{theorem}[Lorenz's condition] \label{thm3}
If $A^-_a$ has a decomposition: $A^-_a = A^z + A^s = (a_{ij}^z) + (a_{ij}^s)$ with $A^s\leq 0$ and $A^z \leq 0$, such that 
\begin{subequations}
 \label{lorenz-condition}
\begin{align}
& A_d + A^z \textrm{ is a nonsingular M-matrix},\label{cond1}\\ 
& A^+_a \leq A^zA^{-1}_dA^s \textrm{ or equivalently } \forall a_{ij} > 0 \textrm{ with } i \neq j, a_{ij} \leq \sum_{k=1}^n a_{ik}^za_{kk}^{-1}a_{kj}^s,\label{cond2}\\
& \exists \mathbf e \in \mathbbm{R}^n\setminus\{\mathbf 0\}, \mathbf e\geq 0 \textrm{ with $A\mathbf e \geq 0$ s.t. $A^z$ or $A^s$  connects $\mathcal N^0(A\mathbf e)$ with $\mathcal N^+(A\mathbf e)$.} \label{cond3}
\end{align}
\end{subequations}
Then $A$ is a product of two nonsingular M-matrices thus $A^{-1}\geq 0$.
\end{theorem}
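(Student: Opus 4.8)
The plan is to reduce Theorem \ref{thm3} to Theorem \ref{thm2} by exhibiting a single M-matrix $M_1$ and a matrix $L$ with $L_a \le 0$ such that $A \le M_1 L$, and then checking the connectivity hypothesis carries over. Motivated by the classical Lorenz factorization, I would set
\begin{equation*}
M_1 := A_d + A^z, \qquad L := I + A_d^{-1}A^s,
\end{equation*}
so that $M_1 L = (A_d + A^z) + (A_d + A^z)A_d^{-1}A^s = A_d + A^s + A^z + A^z A_d^{-1} A^s$. First I would verify $M_1$ is a nonsingular M-matrix: this is exactly hypothesis \eqref{cond1}. Next, $L_a \le 0$: the off-diagonal part of $L$ is $(A_d^{-1}A^s)_a$; since $A^s \le 0$ has zero diagonal (it is part of $A_a^-$) and $A_d$ has positive diagonal entries (as $A_d + A^z$ is a nonsingular M-matrix and $A^z$ has nonpositive diagonal, forcing $a_{ii} > 0$), each off-diagonal entry of $A_d^{-1}A^s$ is $a_{ii}^{-1}a_{ij}^s \le 0$, so $L_a \le 0$ holds.

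The core computation is the inequality $A \le M_1 L$. Writing $A = A_d + A_a^+ + A_a^- = A_d + A_a^+ + A^z + A^s$, and comparing with $M_1 L = A_d + A^s + A^z + A^z A_d^{-1} A^s$ computed above, the inequality $A \le M_1 L$ reduces entrywise to $A_a^+ \le A^z A_d^{-1} A^s$ on the off-diagonal (the diagonal and the $A^z$, $A^s$ terms match exactly). That is precisely hypothesis \eqref{cond2}. One bookkeeping point to be careful about: I should confirm the diagonal of $A^z A_d^{-1} A^s$ is nonnegative so that the diagonal comparison $a_{ii} \le (M_1 L)_{ii}$ is not violated — but since $A^z \le 0$ and $A^s \le 0$ with zero diagonals, $(A^z A_d^{-1} A^s)_{ii} = \sum_{k} a_{ik}^z a_{kk}^{-1} a_{ki}^s \ge 0$, so indeed $(M_1 L)_{ii} = a_{ii} + (A^z A_d^{-1} A^s)_{ii} \ge a_{ii}$, consistent with $A \le M_1 L$.

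Finally I would invoke Theorem \ref{thm2} with $k = 1$: we have $A \le M_1 L$ with $M_1$ a nonsingular M-matrix and $L_a \le 0$, and by hypothesis \eqref{cond3} there is a nonzero $\mathbf e \ge 0$ with $A\mathbf e \ge 0$ such that $A^z$ or $A^s$ connects $\mathcal N^0(A\mathbf e)$ with $\mathcal N^+(A\mathbf e)$. Here the only subtlety is matching the connectivity requirement: Theorem \ref{thm2} asks that one of $M_1$ or $L$ provide the connection, whereas \eqref{cond3} phrases it in terms of $A^z$ or $A^s$. This is bridged by noting that a nonzero entry of $A^z$ at position $(i,j)$ forces $a_{ij}^z + a_{ii}\delta_{ij} \ne 0$, hence $(M_1)_{ij} \ne 0$ when $i \ne j$ (and trivially when $i=j$ since $a_{ii} > 0$), so $A^z$ connecting a set implies $M_1$ does; likewise a nonzero off-diagonal entry of $A^s$ gives a nonzero off-diagonal entry of $L = I + A_d^{-1}A^s$, so $A^s$ connecting implies $L$ does — and directed paths are preserved since adding diagonal entries only adds self-loops. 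Then Theorem \ref{thm2} yields that $M_1^{-1}A$ is an M-matrix, so $A = M_1 (M_1^{-1}A)$ is a product of two nonsingular M-matrices and $A^{-1} \ge 0$. I expect the main obstacle to be the careful handling of this connectivity translation between the $\{A^z, A^s\}$ formulation and the $\{M_1, L\}$ formulation, together with making sure all the zero-diagonal and positive-diagonal facts about $A^z$, $A^s$, $A_d$ are rigorously justified from \eqref{cond1} rather than assumed.
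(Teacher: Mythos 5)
Your proof is correct and takes essentially the same route as the paper: the paper cites \cite{lorenz1977inversmonotonie, li2019monotonicity} for this statement, but its own proof of the relaxed version (Theorem \ref{thm3-r}) is exactly your argument with $A_{d^*}$ in place of $A_d$, namely factoring $A \leq (A_d + A^z)(I + A_d^{-1}A^s)$ and invoking Theorem \ref{thm2} with $k=1$. The additional care you take with the zero diagonals of $A^z$ and $A^s$, the positivity of $A_d$, and the translation of the connectivity hypothesis from $\{A^z, A^s\}$ to $\{M_1, L\}$ is precisely the bookkeeping the paper leaves implicit, and it all checks out.
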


\begin{Proposition} The matrix $L$ in Theorem \ref{thm2} must be an M-matrix.
\end{Proposition}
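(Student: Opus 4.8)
The plan is to derive that $L$ is a nonsingular M-matrix from the entry-wise inequality $A\leq M_1M_2\cdots M_k L$ together with the \emph{conclusion} of Theorem \ref{thm2}, rather than from its hypotheses alone. First I would left-multiply $A\leq M_1M_2\cdots M_k L$ by $(M_1M_2\cdots M_k)^{-1}=M_k^{-1}\cdots M_1^{-1}$. Each $M_i^{-1}\geq 0$ because a nonsingular M-matrix is inverse-positive, and a product of nonnegative matrices is nonnegative; hence this left-multiplication preserves the inequality and gives
\[
N:=M_k^{-1}M_{k-1}^{-1}\cdots M_1^{-1}A\ \leq\ L .
\]
By the conclusion of Theorem \ref{thm2}, $N$ is a nonsingular M-matrix, so $N_a\leq 0$, the diagonal of $N$ is strictly positive, and $N^{-1}\geq 0$.

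The core step is then a comparison lemma: if $N$ is a nonsingular M-matrix, $N\leq L$ entry-wise, and $L_a\leq 0$, then $L$ is a nonsingular M-matrix. I would prove it by splitting $N=D_N-B_N$ and $L=D_L-B_L$ into diagonal parts $D_N,D_L$ and negated off-diagonal parts $B_N=-N_a\geq 0$, $B_L=-L_a\geq 0$. From $0<D_N\leq D_L$ (a nonsingular M-matrix has positive diagonal, and $N_{ii}\leq L_{ii}$) the diagonal matrix $D_L$ is positive and $0\leq D_L^{-1}\leq D_N^{-1}$; combined with $0\leq B_L\leq B_N$ this yields $0\leq D_L^{-1}B_L\leq D_N^{-1}B_N$ entry-wise. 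Monotonicity of the spectral radius on nonnegative matrices then gives $\rho(D_L^{-1}B_L)\leq\rho(D_N^{-1}B_N)<1$, the last inequality because $N=D_N(I-D_N^{-1}B_N)$ with $N$ a nonsingular M-matrix. Hence the Neumann series for $(I-D_L^{-1}B_L)^{-1}$ converges and is nonnegative, so $L^{-1}=(I-D_L^{-1}B_L)^{-1}D_L^{-1}\geq 0$; together with $L_a\leq 0$ this identifies $L$ as a nonsingular M-matrix by the characterization in the appendix.

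This argument is essentially routine. The only delicate point is keeping track of the directions of the inequalities in the comparison step — that $D_N\leq D_L$ inverts to $D_N^{-1}\geq D_L^{-1}$ while $B_L\leq B_N$, so the products still compare the ``right'' way — and noticing at the outset that one must invoke the conclusion of Theorem \ref{thm2}, namely that $M_k^{-1}\cdots M_1^{-1}A$ is an M-matrix, not merely the hypotheses placed on $L$ and on the $M_i$.
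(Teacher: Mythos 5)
Your proof is correct, but it takes a genuinely different route from the paper's. The paper works with the vector $\mathbf e$ from hypothesis \eqref{cond3}: it shows $L\mathbf e\geq M^{-1}A\mathbf e\geq 0$, then runs a case analysis (depending on whether $L$ or one of the $M_i$ provides the connection) to conclude that $L$ connects $\mathcal N^0(L\mathbf e)$ with $\mathcal N^+(L\mathbf e)$, and finally invokes the connectivity-based M-matrix criterion (Theorem 6 of \cite{li2019monotonicity}) together with $L_a\leq 0$. You instead take the \emph{conclusion} of Theorem \ref{thm2} as given — $N:=M_k^{-1}\cdots M_1^{-1}A$ is a nonsingular M-matrix — combine it with $N\leq L$ (obtained by left-multiplying the hypothesis by the nonnegative matrix $M_k^{-1}\cdots M_1^{-1}$), and apply the classical M-matrix comparison theorem via regular splittings: $0\leq D_L^{-1}B_L\leq D_N^{-1}B_N$ and Perron--Frobenius monotonicity give $\rho(D_L^{-1}B_L)\leq\rho(D_N^{-1}B_N)<1$, hence $L^{-1}\geq 0$. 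This is not circular, since Theorem \ref{thm2} is established independently, and your route is arguably cleaner: it avoids re-tracing the connectivity argument and rests on a textbook comparison result (which you could also simply cite from Berman--Plemmons rather than reprove). The paper's route stays entirely within the $\mathbf e$-and-connectivity toolkit used elsewhere in the section and does not need the spectral-radius machinery. One cosmetic point: your final appeal to ``the characterization in the appendix'' is not quite apt — the appendix states row-sum criteria, whereas what you actually use is the standard characterization that a matrix with non-positive off-diagonal entries and nonnegative inverse is a nonsingular M-matrix (condition $N_{38}$ in \cite{plemmons1977m}); citing that directly would close the loop.
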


\begin{proof} Let $M^{-1} = M_k^{-1}M_{k-1}^{-1}...M_1^{-1}$,  following the proof of Theorem 7 in \cite{li2019monotonicity}, then $M^{-1}A\mathbf{e} \geq cA\mathbf{e}$ for some positive number $c$. Then $A\mathbf{e} \geq 0 \Rightarrow M^{-1}A\mathbf{e} \geq 0$. Now since $\mathbf{e} \geq 0$, $M^{-1}A \leq L \Rightarrow  0\leq (L-M^{-1}A)\mathbf e\Rightarrow M^{-1}A\mathbf{e} \leq L\mathbf{e}$ thus $L\mathbf{e} \geq 0$.

Assume $L$ connects $\mathcal{N}^0(A\mathbf{e})$ with $\mathcal{N}^+(A\mathbf{e})$. Since  $M^{-1}A\mathbf{e} \leq{} L\mathbf{e}$, $\mathcal{N}^0(L\mathbf{e}) \subseteq \mathcal{N}^0(A\mathbf{e})$ and $\mathcal{N}^+(A\mathbf{e}) \subseteq \mathcal{N}^+(L\mathbf{e})$, so $L$ also connects $\mathcal{N}^0(L\mathbf{e})$ with $\mathcal{N}^+(L\mathbf{e})$. 

Assume $M_i$ connects $\mathcal{N}^0(A\mathbf{e})$ with $\mathcal{N}^+(A\mathbf{e})$, following the proof of Theorem 7 in \cite{li2019monotonicity}, we have $M^{-1}A\mathbf{e} > 0$. Now $L$ trivially connects $\mathcal{N}^0(L\mathbf{e})$ with $\mathcal{N}^+(L\mathbf{e})$ since $L\mathbf{e} \geq M^{-1}A\mathbf{e} \Rightarrow{} L\mathbf{e} > 0$ and $\mathcal{N}^0(L\mathbf{e}) = \emptyset{}$. 

Then Theorem 6  in \cite{li2019monotonicity} applies to show $L$ is an M-matrix. 
\end{proof}

In practice, the condition \eqref{cond3} can be difficult to verify. For variational difference schemes, the vector $\mathbf e$ can be taken as $\mathbf 1$ consisting of all ones, then the condition \eqref{cond3} can be simplified. The following theorem was proven in \cite{li2019monotonicity}.

\begin{theorem}
 \label{newthm3}
Let $A$ denote the matrix representation of the variational difference scheme \eqref{nonhom-var-num3} with $Q^2$ basis  solving $-\nabla\cdot( a\nabla)u+cu=f$. 
Assume $A^-_a$ has a decomposition $A^-_a = A^z + A^s$ with $A^s\leq 0$ and $A^z \leq 0$.  
Then $A^{-1}\geq 0$ if the following are satisfied:
\begin{enumerate}
\item $(A_d+A^z)\mathbf 1\neq \mathbf 0$  and $(A_d+A^z)\mathbf 1\geq 0$;
\item $A^+_a \leq A^zA^{-1}_dA^s$;
 \item For $c(x,y)\geq 0$, either $A^z$ or $A^s$ has the same sparsity pattern as $A^-_a$. 
 If $c(x,y)>0$, then this condition can be removed.  
\end{enumerate}

\end{theorem}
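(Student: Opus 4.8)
The plan is to deduce Theorem \ref{newthm3} from Lorenz's condition (Theorem \ref{thm3}) by making the explicit choice of test vector $\mathbf e = \mathbf 1$. Condition (2) of Theorem \ref{newthm3} is literally the inequality \eqref{cond2}, so nothing is needed there; the task is to show that conditions (1) and (3), together with the structure of the $Q^2$ variational difference matrix $A=\bar L_h$, imply the hypotheses \eqref{cond1} and \eqref{cond3}.

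First I would compute $A\mathbf 1$. Since $\nabla 1=0$, the stiffness part of the bilinear form contributes nothing to $A\mathbf 1$, while the reaction term is integrated by the lumped (Gauss--Lobatto) mass matrix with strictly positive weights; after inverting that diagonal mass matrix one gets $(A\mathbf 1)_{i,j}=c(x_i,y_j)$ at every interior grid point and $(A\mathbf 1)_{i,j}=1$ at every boundary grid point. Hence $A\mathbf 1\geq 0$ whenever $c\geq 0$, $\mathbf 1\neq\mathbf 0$, $\mathcal N^+(A\mathbf 1)$ always contains the boundary indices, and $\mathcal N^0(A\mathbf 1)$ is contained in the set of interior nodes where $c$ vanishes. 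If $c>0$ everywhere then $\mathcal N^0(A\mathbf 1)=\emptyset$, so the connection requirement in \eqref{cond3} is vacuous --- which is precisely why condition (3) may be dropped in that case. If $c\geq 0$ but vanishes somewhere (in particular for the Laplacian), I would invoke condition (3): since $A^z$ or $A^s$ then has the same sparsity pattern as $A^-_a$, its directed graph coincides with that of $A^-_a$, and one checks by inspection of the explicit stencils in \eqref{Q2-laplacian} that in the graph of $A^-_a$ every interior node reaches, along negative couplings, a node adjacent to $\partial\Omega$ and then a boundary node; together with $\partial\Omega\subset\mathcal N^+(A\mathbf 1)$ this gives \eqref{cond3}.

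For \eqref{cond1} I would first note that $A_d+A^z$ is a Z-matrix: its diagonal equals that of $A$, which is strictly positive for the $Q^2$ spectral element method, and its off-diagonal part is $A^z\leq 0$. To upgrade this to a nonsingular M-matrix I would use the characterization in the appendix: a Z-matrix $Z$ with $Z\mathbf 1\geq 0$, $Z\mathbf 1\neq\mathbf 0$, that connects $\mathcal N^0(Z\mathbf 1)$ with $\mathcal N^+(Z\mathbf 1)$ is a nonsingular M-matrix. Condition (1) supplies exactly $Z\mathbf 1=(A_d+A^z)\mathbf 1\geq 0$ and $\neq\mathbf 0$; the boundary rows of $A$ are identity rows, so the boundary indices lie in $\mathcal N^+((A_d+A^z)\mathbf 1)$, and the needed connection follows from the sparsity of $A^z$ reaching the boundary (guaranteed by condition (3) when $c\geq 0$, and immediate when $c>0$ since then the relevant null set is empty or correspondingly reduced). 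With \eqref{cond1}, \eqref{cond2}, \eqref{cond3} all verified, Theorem \ref{thm3} gives that $A$ is a product of two nonsingular M-matrices, hence $A^{-1}\geq 0$.

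The step I expect to be the main obstacle is the graph-theoretic bookkeeping underlying \eqref{cond1} and \eqref{cond3}: one must verify, from the cell-center, edge-center and knot stencils, that restricting to the negative off-diagonal couplings --- or to whichever of $A^z$, $A^s$ carries the full sparsity of $A^-_a$ --- still leaves every interior node connected to the boundary, and one must carefully track how the index sets $\mathcal N^0(A\mathbf 1)$ and $\mathcal N^+(A\mathbf 1)$ (and likewise for $(A_d+A^z)\mathbf 1$) shrink when $c>0$ versus $c\geq 0$, since that distinction is exactly what makes condition (3) removable.
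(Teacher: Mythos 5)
Your proposal is correct and follows exactly the route the paper intends: Theorem \ref{newthm3} is stated here without proof (it is quoted from \cite{li2019monotonicity}) precisely as the specialization of Theorem \ref{thm3} to $\mathbf e=\mathbf 1$, with the variational identity $(A\mathbf 1)_{i,j}=c(x_i,y_j)$ at interior points and $=1$ at boundary points supplying $A\mathbf 1\geq 0$ and the $c>0$ versus $c\geq 0$ dichotomy for \eqref{cond3}, and with condition (1) feeding the row-sum M-matrix criterion of Theorem \ref{rowsumcondition-thm} to obtain \eqref{cond1}. Your reconstruction, including the graph-connectivity bookkeeping you flag as the delicate step, matches that intended argument.
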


\subsection{A relaxed Lorenz's condition}
\label{sec-lorenz-3}
In practice, both  \eqref{cond1} and  \eqref{cond2} impose mesh constraints for the $Q^2$ spectral element method on
non-uniform meshes. The condition    \eqref{cond1}
can be relaxed as the following:
\begin{theorem}[A relaxed Lorenz's condition] \label{thm3-r}
If $A^-_a$ has a decomposition: $A^-_a = A^z + A^s = (a_{ij}^z) + (a_{ij}^s)$ with $A^s\leq 0$ and $A^z \leq 0$,
and there exists a diagonal matrix $A_{d^*}\geq A_{d}$ such that 
\begin{subequations}
 \label{lorenz-condition-r}
\begin{align}
& A_d^* + A^z \textrm{ is a nonsingular M-matrix},\label{cond1-r}\\ 
& A^+_a \leq A^zA^{-1}_{d^*}A^s,\label{cond2-r} \\
& \exists \mathbf e \in \mathbbm{R}^n\setminus\{\mathbf 0\}, \mathbf e\geq 0 \textrm{ with $A\mathbf e \geq 0$ s.t. $A^z$ or $A^s$  connects $\mathcal N^0(A\mathbf e)$ with $\mathcal N^+(A\mathbf e)$.} \label{cond3-r}
\end{align}
\end{subequations}
Then $A$ is a product of two nonsingular M-matrices thus $A^{-1}\geq 0$.
\end{theorem}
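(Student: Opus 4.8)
The plan is to reduce Theorem \ref{thm3-r} to Theorem \ref{thm2} with $k=1$, following the same route by which the original Lorenz condition (Theorem \ref{thm3}) is obtained from Theorem \ref{thm2}, but with the diagonal part $A_d$ replaced by the enlarged diagonal $A_{d^*}$. The key observation is that Theorem \ref{thm2} only requires the entrywise majorization $A\le M_1L$, never an equality, so the surplus $A_{d^*}-A_d\ge 0$ can simply be carried along as additional slack.

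Concretely, I would take $M_1=A_{d^*}+A^z$ and $L=I+A_{d^*}^{-1}A^s$. By \eqref{cond1-r}, $M_1$ is a nonsingular M-matrix; since $A^z$ has zero diagonal, the diagonal of $M_1$ equals that of $A_{d^*}$, and a nonsingular M-matrix has positive diagonal entries, so $A_{d^*}$ is invertible with $A_{d^*}^{-1}$ a nonnegative diagonal matrix, making $L$ well defined. Because $A^s\le 0$ has zero diagonal, $A_{d^*}^{-1}A^s$ is off-diagonal and nonpositive, so $L_a=A_{d^*}^{-1}A^s\le 0$, which is one of the hypotheses of Theorem \ref{thm2}.

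Next I would verify $A\le M_1L$. Expanding, $M_1L=A_{d^*}+A^z+A^s+A^zA_{d^*}^{-1}A^s$, whereas $A=A_d+A^z+A^s+A_a^+$, so $M_1L-A=(A_{d^*}-A_d)+(A^zA_{d^*}^{-1}A^s-A_a^+)$; the first bracket is nonnegative since $A_{d^*}\ge A_d$, and the second is nonnegative by \eqref{cond2-r}, giving $A\le M_1L$. For the connectivity requirement of Theorem \ref{thm2}, I would note that $M_1$ and $A^z$ share the same off-diagonal sparsity pattern, and likewise $L$ and $A^s$ share the same off-diagonal pattern (as $A_{d^*}^{-1}$ has strictly positive diagonal entries); since $A^z$ and $A^s$ have zero diagonals, any directed path realizing a connection uses only off-diagonal edges, so the hypothesis \eqref{cond3-r} that $A^z$ or $A^s$ connects $\mathcal N^0(A\mathbf e)$ with $\mathcal N^+(A\mathbf e)$ transfers immediately to $M_1$ or $L$.

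With the nonzero $\mathbf e\ge 0$ from \eqref{cond3-r} satisfying $A\mathbf e\ge 0$, all hypotheses of Theorem \ref{thm2} with $k=1$ hold, so $M_1^{-1}A$ is a nonsingular M-matrix and $A=M_1(M_1^{-1}A)$ is a product of two nonsingular M-matrices, whence $A^{-1}\ge 0$. I do not expect a genuinely difficult step here; the whole point is that enlarging the diagonal costs nothing once one works with inequalities rather than an exact factorization. The only items needing a little care are the bookkeeping in $M_1L-A$, to ensure that the extra diagonal term $A_{d^*}-A_d$ does not disturb the off-diagonal comparison \eqref{cond2-r}, and the verification that $A_{d^*}$ inherits positive, hence invertible, diagonal entries from \eqref{cond1-r}.
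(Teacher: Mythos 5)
Your proposal is correct and follows essentially the same route as the paper: both take $M_1=A_{d^*}+A^z$ and $L=I+A_{d^*}^{-1}A^s$, verify $A\leq M_1L$ using $A_{d^*}\geq A_d$ together with \eqref{cond2-r}, transfer the connectivity hypothesis \eqref{cond3-r} to $M_1$ or $L$, and invoke Theorem \ref{thm2} with $k=1$. Your write-up simply spells out a few details (invertibility of $A_{d^*}$, $L_a\leq 0$) that the paper leaves implicit.
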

\begin{proof}
It is straightforward that $A=A_d + A_a^++A^z+A^s \leq A_{d^*} + A^z +A^s+ A^zA_{d^*}^{-1}A^s= (A_{d^*} + A^z)(I + A_{d^*}^{-1}A^s)$.
By \eqref{cond3-r}, either $A_{d^*} + A^z$ or $I+A^{-1}_{d^*}A^s$ connects $\mathcal N^0(A\mathbf e)$ with $\mathcal N^+(A\mathbf e)$. 
By applying Theorem \ref{thm2} for the case $k=1$, $M_1=A_{d^*}+A^z$ and $L=I+A^{-1}_{d^*}A^s$, we get $A^{-1}\geq 0$. 
\end{proof}

\begin{rmk}
Since $A_{d}\leq A_{d^*}$, only \eqref{cond1-r} is more relaxed than \eqref{cond1}, and  \eqref{cond2-r} is more stringent than \eqref{cond2}.
However, we will show in next section that it is possible to construct $A_{d^*}$ such that \eqref{cond2} and \eqref{cond2-r} impose identical mesh constraints.
\end{rmk}

With Theorem \ref{rowsumcondition-thm},  combining Theorem \ref{thm3-r} and Theorem \ref{newthm3}, we have:
\begin{theorem}
 \label{newthm3-r}
Let $A$ denote the matrix representation of the variational difference scheme \eqref{nonhom-var-num3} with $Q^2$ basis  solving $-\nabla\cdot( a\nabla)u+cu=f$. 
Assume $A^-_a$ has a decomposition $A^-_a = A^z + A^s$ with $A^s\leq 0$ and $A^z \leq 0$
and there exists a diagonal matrix $A_{d^*}\geq A_{d}$.  
Then $A^{-1}\geq 0$ if the following are satisfied:
\begin{enumerate}
\item $(A_{d^*}+A^z)\mathbf 1\neq \mathbf 0$  and $(A_{d^*}+A^z)\mathbf 1\geq 0$;
\item $A^+_a \leq A^zA^{-1}_{d^*}A^s$;
 \item For $c(x,y)\geq 0$, either $A^z$ or $A^s$ has the same sparsity pattern as $A^-_a$. 
 If $c(x,y)>0$, then this condition can be removed.  
\end{enumerate}

\end{theorem}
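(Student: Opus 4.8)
The plan is to derive Theorem~\ref{newthm3-r} from the relaxed Lorenz's condition in Theorem~\ref{thm3-r} by verifying its three hypotheses \eqref{cond1-r}--\eqref{cond3-r} for the particular choice $\mathbf e=\mathbf 1$, exactly as Theorem~\ref{newthm3} was obtained from Theorem~\ref{thm3} in \cite{li2019monotonicity}. The only new bookkeeping is that the diagonal $A_d$ is replaced by the larger diagonal $A_{d^*}$: since $A_d\le A_{d^*}$, this only relaxes \eqref{cond1-r}, only tightens \eqref{cond2-r}, and leaves \eqref{cond3-r} untouched, so none of the structural steps of the original argument break.

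First I would record the row-sum structure of $A$. By Theorem~\ref{rowsumcondition-thm}, the matrix $A$ representing the $Q^2$ variational difference scheme \eqref{nonhom-var-num3} for $-\nabla\cdot(a\nabla u)+cu$ satisfies $A\mathbf 1\ge 0$ and $A\mathbf 1\ne\mathbf 0$: the stiffness part has vanishing interior row sums because $\nabla 1=0$, the lumped mass part contributes the nonnegative quantities $\langle c\cdot 1,\phi_i\rangle_h$ at interior quadrature points since $c\ge 0$, and each boundary row of $A$ is a unit row whose sum equals $1$. Hence every boundary index lies in $\mathcal N^+(A\mathbf 1)$, while $\mathcal N^0(A\mathbf 1)$ contains only interior indices and is empty when $c>0$. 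This already supplies the requirement $A\mathbf e\ge 0$ appearing in \eqref{cond3-r} with $\mathbf e=\mathbf 1$.

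Next I would verify \eqref{cond1-r}. The matrix $A_{d^*}+A^z$ has nonpositive off-diagonal entries, since these come from $A^z\le 0$, and positive diagonal entries, since the diagonal of $A$ for the $Q^2$ scheme is positive and $A_{d^*}\ge A_d$; by item~1 its row sums are nonnegative and not all zero. Together with the connectivity furnished by item~3 --- one of $A^z,A^s$, hence $A^-_a=A^z+A^s$ itself, has the full sparsity pattern of $A^-_a$, whose directed graph is the nearest-neighbor grid graph (each cell-center, edge-center and knot stencil couples its point to every nearest-neighbor grid point with a negative weight) and therefore links every interior quadrature point to the boundary --- this makes $A_{d^*}+A^z$ a $Z$-matrix with positive diagonal and nonnegative, not-all-zero row sums whose graph links its zero-row-sum rows to its positive-row-sum rows, hence a nonsingular M-matrix by the M-matrix characterization recalled in the appendix. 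This is the same reasoning used for \eqref{cond1} in the proof of Theorem~\ref{newthm3}, with $A_{d^*}$ in place of $A_d$, and enlarging the diagonal only reinforces the diagonal dominance. Condition \eqref{cond2-r} is literally item~2, so nothing further is needed there.

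Finally I would verify \eqref{cond3-r}. If $c>0$ then $\mathcal N^0(A\mathbf 1)=\emptyset$ and the connectivity requirement is vacuous by the convention for an empty source set, which is why item~3 may be dropped in that case. If $c\ge 0$, then by item~3 one of $A^z,A^s$ has the sparsity pattern of $A^-_a$, so by the grid-graph argument above it connects any index of $\mathcal N^0(A\mathbf 1)$ --- necessarily an interior index --- to some boundary index of $\mathcal N^+(A\mathbf 1)$. Thus \eqref{cond3-r} holds with $\mathbf e=\mathbf 1$, and Theorem~\ref{thm3-r} yields that $A$ is a product of two nonsingular M-matrices, so $A^{-1}\ge 0$. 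I expect the only delicate point to be this connectivity bookkeeping: one must check that moving the positive far-neighbor couplings into $A^+_a$ and splitting the remainder as $A^-_a=A^z+A^s$ never severs the path from an interior quadrature point to $\partial\Omega$, and that the zero-row-sum set of $A_{d^*}+A^z$ is reached by the same graph --- but all of this is inherited from the proof of Theorem~\ref{newthm3}, since $A^z$, $A^s$, $A^+_a$ and the stencil graph do not depend on the choice of $A_{d^*}$.
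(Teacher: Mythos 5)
Your proposal is correct and matches the paper's intended argument: the paper states Theorem~\ref{newthm3-r} as an immediate consequence of combining Theorem~\ref{thm3-r}, Theorem~\ref{newthm3}, and Theorem~\ref{rowsumcondition-thm}, and your write-up simply fills in that combination by taking $\mathbf e=\mathbf 1$, using item~1 with Theorem~\ref{rowsumcondition-thm} for \eqref{cond1-r}, item~2 for \eqref{cond2-r}, and item~3 plus the stencil's nearest-neighbor connectivity for \eqref{cond3-r}. No substantive difference from the paper's route.
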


\section{Monotonicity of $Q^2$ spectral element method on quasi-uniform meshes}
\label{sec:main}

The $Q^2$ spectral element method
has been proven  monotone on a uniform mesh for Laplacian operator without any  mesh constraints \cite{li2019monotonicity}.
In this section, we will discuss its monotonicity for the Laplacian operator on quasi-uniform meshes. 
The discussion in this section can be easily extended to more general cases such as $\mathcal L u=-\Delta u+cu$ and Neumann boundary conditions. For simplicity, we only discuss the Laplacian case $\mathcal L u=-\Delta u$ and Dirichlet boundary conditions. 

 Consider a grid $(x_i,y_j)$ ($i, j= 0,1,\dots, n+1$) for a rectangular domain $[0,1]\times[0,1]$
 where $n$ must be odd and $i,j=0,n+1$ correspond to boundary points.
Let $u_{ij}$ denote the numerical solution at $(x_i, y_j)$.
Let $\bar{\mathbf u}$ denote an abstract vector consisting of $u_{ij}$ for $i,j=0,1,2,\cdots,n,n+1$.
Let $\bar{\mathbf f}$ denote an abstract vector consisting of $f_{ij}$ for $i,j=1,2,\cdots,n$ and the boundary condition $g$ at the boundary grid points. 
Then the matrix vector representation of  \eqref{nonhom-var-num3} with $Q^2$ basis is
$\bar L_h \bar{\mathbf u} =\bar{\mathbf f}$.

The focus of this section is to show $\bar L_h^{-1}\geq 0$ under suitable mesh constraints for quasi-uniform meshes. 
 Moreover, it is straightforward to verify that $(\bar L_h  \mathbf 1 )_{i,j}=0$ for interior points $(x_i, y_j)$ and $(\bar L_h  \mathbf 1 )_{i,j}=1$ for boundary points $(x_i, y_j)$. 
 Thus by Section \ref{sec-dmp}, the scheme also satisfies the discrete maximum principle.

For simplicity, in the rest of this section we use $A$ to denote the matrix $\bar L_h$ and 
let $\mathcal A$ be the linear operator corresponding to the matrix $A$. 
For convenience, we can also regard the abstract vector $\bar{\mathbf u}$ as a matrix of size $(n+2)\times(n+2)$. 
Then by our notation, the mapping $\mathcal A: \mathbbm R^{(n+2)\times(n+2)}\rightarrow \mathbbm R^{(n+2)\times(n+2)}$ is given as $\mathcal A( \bar{\mathbf u} )_{i,j}:=
(\bar L_h \bar{\mathbf u} )_{i,j}$. 
\subsection{The scheme in two dimensions}\label{q2_scheme}
     \begin{figure}[htbp]
   \centering
 \subfigure[Mesh length definitions for four adjacent $Q^2$ elements. ]{\includegraphics{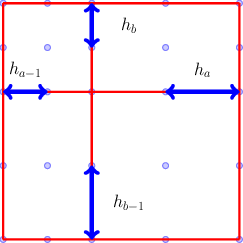}}
 \hspace{.6in}
 \subfigure[The four distinct point types. ]{
\includegraphics{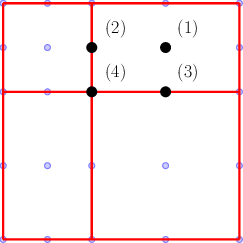} }
\caption{A non-uniform mesh for $Q^2$ spectral element method. Each edge in a cell has length $2h$.}
\label{mesh}
 \end{figure}

 For boundary points $(x_i, y_j)\in\partial \Omega$, the scheme is $\mathcal A( \bar{\mathbf u} )_{i,j}:=u_{i,j}=g_{i,j}$. The scheme for interior grid points $(x_i, y_j)\in \Omega$ on a non-uniform mesh can be given on  four distinct types of points shown in  Figure \ref{mesh} (b).  For simplicity, from now on, we will use {\it edge center (2)} to denote an interior edge center for an edge  parallel to the y-axis, and {\it edge center (3)} to denote an interior edge center for an edge  parallel to the x-axis. The scheme at an interior grid point is given as $\mathcal A( \bar{\mathbf u} )_{i,j}=f_{i,j}$ with
\begin{align}
\label{q2-scheme-nonuniform}       \mathcal A( \bar{\mathbf u} )_{i,j}:=& \frac{2h_a^2+2h_b^2}{h_a^2h_b^2}u_{i,j}-\left(\frac{1}{h_a^2}u_{i+1,j} + \frac{1}{h_a^2}u_{i-1,j} + \frac{1}{h_b^2}u_{i,j+1} + \frac{1}{h_b^2}u_{i,j-1}\right) \\ &\text{if $(x_i,y_j)$ is a cell center} \notag;
\\ 
              \mathcal A( \bar{\mathbf u} )_{i,j}:=  & \frac{7h_b^2+4h_ah_{a-1}}{2h_ah_{a-1}h_b^2}u_{i,j} -\frac{4}{h_a(h_a + h_{a-1})}u_{i+1,j}-\frac{4}{h_{a-1}(h_a + h_{a-1})}u_{i-1,j} \notag \\
              &-\frac{1}{h_b^2}u_{i,j+1} - \frac{1}{h_b^2}u_{i,j-1} +\frac{1}{2h_a(h_a+h_{a-1})}u_{i+2,j} + \frac{1}{2h_{a-1}(h_a+h_{a-1})}u_{i-2,j}, \notag\\ 
              & \text{if $(x_i,y_j)$ is edge center (2);} \notag
\\ 
            \mathcal A( \bar{\mathbf u} )_{i,j}:= &    \frac{7h_a^2+4h_bh_{b-1}}{2h_bh_{b-1}h_a^2}u_{i,j}- \frac{4}{h_b(h_b + h_{b-1})}u_{i,j+1}-\frac{4}{h_{b-1}(h_b + h_{b-1})}u_{i,j-1} \notag\\
            & -\frac{1}{h_a^2}u_{i+1,j} - \frac{1}{h_a^2}u_{i-1,j}+ \frac{1}{2h_{b}(h_b+h_{b-1})}u_{i,j+2} + \frac{1}{2h_{b-1}(h_b+h_{b-1})}u_{i,j-2}, \notag\\ 
            & \text{if $(x_i,y_j)$ is edge center (3);} \notag
\\ 
            \mathcal A( \bar{\mathbf u} )_{i,j}:=  & \frac{7h_ah_{a-1}+7h_bh_{b-1}}{2h_ah_{a-1}h_bh_{b-1}}u_{i,j}-\left[ \frac{4}{h_a(h_a + h_{a-1})}u_{i+1,j}+\frac{4}{h_{a-1}(h_a + h_{a-1})}u_{i-1,j}\right. 
         \notag   \\&  \left. + \frac{4}{h_b(h_b + h_{b-1})}u_{i,j+1}+  \frac{4}{h_{b-1}(h_b + h_{b-1})}u_{i,j-1}\right] + \frac{1}{2h_a(h_a+h_{a-1})}u_{i+2,j} 
         \notag   \\ &+\frac{1}{2h_{a-1}(h_a+h_{a-1})}u_{i-2,j} + \frac{1}{2h_{b}(h_b+h_{b-1})}u_{i,j+2} + \frac{1}{2h_{b-1}(h_b+h_{b-1})}u_{i,j-2},
          \notag  \\& \text{if $(x_i,y_j)$ is an interior knot.} \notag
    \end{align}

For a uniform mesh $h_{a}=h_{a-1}=h_b=h_{b-1}=h$, the scheme reduces to \eqref{Q2-laplacian}.

\subsection{The Decomposition of $A_a^-$}

Next, by the same notations defined in Section \ref{sec-lorenz-2}, we will decompose the matrix $A = A_d + A_a^- + A_a^+$ and  $A_a^- = A^z + A^s$  to verify Theorem \ref{newthm3}. We will use $\mathcal A_a^-$, $\mathcal A_a^+$, $\mathcal A^z$ and $\mathcal A^s$ to denote 
linear operators for corresponding  matrices.
First, for the diagonal part we have 
\begin{align*}
           \mathcal A_d( \bar{\mathbf u} )_{i,j} & = u_{i,j }, \quad \text{if $(x_i,y_j)$ is a boundary point;}\\
           \mathcal A_d( \bar{\mathbf u} )_{i,j} & = \frac{2h_a^2 + 2h_b^2}{h_a^2h_b^2}u_{i,j}, \quad \text{if $(x_i,y_j)$ is a cell center;}\\
  \mathcal A_d( \bar{\mathbf u} )_{i,j}& = \frac{7h_b^2 + 4h_ah_{a-1}}{2h_ah_{a-1}h_b^2}u_{i,j},\quad \text{if $(x_i,y_j)$ is edge center (2);}\\
   \mathcal A_d( \bar{\mathbf u} )_{i,j} &= \frac{7h_a^2 + 4h_bh_{b-1}}{2h_bh_{b-1}h_a^2}u_{i,j},  \quad\text{if $(x_i,y_j)$ is edge center (3);}
\\ 
   \mathcal A_d( \bar{\mathbf u} )_{i,j} & = \frac{7h_bh_{b-1}+ 7h_ah_{a-1}}{2h_ah_{a-1}h_bh_{b-1}}u_{i,j}, \quad \text{if $(x_i,y_j)$ is an interior  knot.}
    \end{align*}
Notice that for a boundary point  $(x_i, y_j)\in\partial \Omega$ we have $ \mathcal A( \bar{\mathbf u} )_{i,j} =\mathcal A_d( \bar{\mathbf u} )_{i,j} = u_{i,j },$
thus for off-diagonal parts, we only need to look at the interior grid points.    
 For positive off-diagonal entries, we have
\begin{align*}
\mathcal A^+_a( \bar{\mathbf u} )_{i,j}  =& 0, \quad \text{if $(x_i,y_j)$ is a cell center;}
\\ 
   \mathcal A^+_a( \bar{\mathbf u} )_{i,j} =& \frac{1}{2h_a(h_a+h_{a-1})}u_{i+2,j} + \frac{1}{2h_{a-1}(h_a+h_{a-1})}u_{i-2,j}, \quad\text{edge center (2);}
 \\ 
    \mathcal A^+_a( \bar{\mathbf u} )_{i,j} = &\frac{1}{2h_{b}(h_b+h_{b-1})}u_{i,j+2} + \frac{1}{2h_{b-1}(h_b+h_{b-1})}u_{i,j-2}, \quad\text{edge center (3);}\\ 
   \mathcal A^+_a( \bar{\mathbf u} )_{i,j} =& \frac{1}{2h_a(h_a+h_{a-1})}u_{i+2,j} + \frac{1}{2h_{a-1}(h_a+h_{a-1})}u_{i-2,j} + \frac{1}{2h_{b}(h_b+h_{b-1})}u_{i,j+2}\\&  + \frac{1}{2h_{b-1}(h_b+h_{b-1})}u_{i,j-2},\quad \text{if $(x_i,y_j)$ is an interior knot.} 
    \end{align*}

Then we perform a decomposition $A_a^- = A^z + A^s$, which depends on two constants $0<\epsilon_1 \leq 1$ and $0<\epsilon_2 \leq 1$. 
\begin{align*}
    \mathcal A^z(\bar{\mathbf u})_{i,j}  = &-\epsilon_1\left( \frac{1}{h_a^2}u_{i+1,j} + \frac{1}{h_a^2}u_{i-1,j} + \frac{1}{h_b^2}u_{i,j+1} + \frac{1}{h_b^2}u_{i,j-1}\right), \quad \text{if $(x_i,y_j)$ is a cell center;}
\\ 
      \mathcal A^z(\bar{\mathbf u})_{i,j} =& -\epsilon_1\left( \frac{1}{h_b^2}u_{i,j+1} + \frac{1}{h_b^2}u_{i,j-1}\right) -\epsilon_2\left[\frac{4}{h_a(h_a + h_{a-1})}u_{i+1,j}+\frac{4}{h_{a-1}(h_a + h_{a-1})}u_{i-1,j}\right],\\ &\text{if $(x_i,y_j)$ is edge center (2);}
\\ 
      \mathcal A^z(\bar{\mathbf u})_{i,j} =& -\epsilon_1\left( \frac{1}{h_a^2}u_{i+1,j} + \frac{1}{h_a^2}u_{i-1,j}\right) -\epsilon_2\left[ \frac{4}{h_b(h_b + h_{b-1})}u_{i,j+1}+\frac{4}{h_{b-1}(h_b + h_{b-1})}u_{i,j-1}\right],\\ &\text{if $(x_i,y_j)$ is edge center (3);}
\\ 
      \mathcal A^z(\bar{\mathbf u})_{i,j} =& -\epsilon_2\left[\frac{4}{h_a(h_a + h_{a-1})}u_{i+1,j}+\frac{4}{h_{a-1}(h_a + h_{a-1})}u_{i-1,j} \right. 
      \\
&     \left. +\frac{4}{h_b(h_b + h_{b-1})}u_{i,j+1}+\frac{4}{h_{b-1}(h_b + h_{b-1})}u_{i,j-1}\right],\quad \text{if $(x_i,y_j)$ is an interior knot.}
    \end{align*}
Notice that $A^z$ defined above has exactly the same sparsity pattern as $A_a^-$ for $0<\epsilon_1 \leq 1$ and $0<\epsilon_2 \leq 1$. 
Let $A^s = A_a^- - A^z$ then $A^s\leq 0$. 


\subsection{Mesh constraints for $A^zA_d^{-1}A^s\geq A^+_a$} \label{azadas}

In order to verify $A^zA_d^{-1}A^s\geq A_a^+$, we only need to discuss nonzero entries in the output of $\mathcal A_a^+(\bar{\mathbf u})$ since $A^zA_d^{-1}A^s\geq 0$.

   \begin{figure}[h]
\subfigure[Four red dots denote non-zero entry locations in $\mathcal{A}_a^+(\bar{\mathbf u})_{i,j}$]{\includegraphics[scale=1]{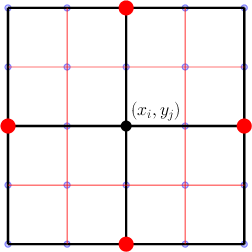}}
\subfigure[Stencil  of $\mathcal{A}^z(\bar{\mathbf u})_{i,j}$. ]{\includegraphics[scale=1]{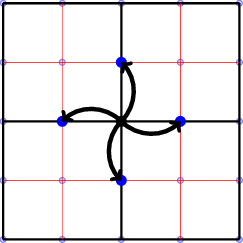}}
\subfigure[Stencil  of $\mathcal{A}^z \mathcal A_d^{-1} \mathcal{A}^s(\bar {\mathbf u})_{i,j} $. ]{\includegraphics[scale=1]{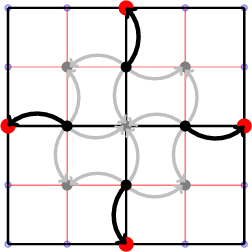}}
\caption{Stencil   of operators  at an  interior knot $(x_i, y_j)$. The four red dots are the locations/entries where $\mathcal{A}_a^+(\bar {\mathbf u})_{i,j}$ are nonzero. Gray nodes in (c) represent positive entries that can be discarded for the purposes of verifying \eqref{cond2-r}. The mesh is illustrated as a uniform one only for simplicity.}
\label{aa_plus_knot}
 \end{figure}

First consider the case that $(x_i,y_j)$ is an interior knot. Figure \ref{aa_plus_knot} (a) shows the positive coefficients in the output of $\mathcal A_a^+(\bar{\mathbf u})_{ij}$ at a knot $(x_i,y_j)$. Figure  \ref{aa_plus_knot} (b) shows  the stencil of $\mathcal A^z(\bar{\mathbf{u}})_{ij}$. Thus $\mathcal A^z(\bar{\mathbf{u}})$ acting as an operator on 
$[\mathcal{A}_d^{-1} \mathcal{A}^s](\bar {\mathbf u})$ at a knot is:
\[\resizebox{\textwidth}{!} 
{$     \begin{gathered}
    [\mathcal{A}^z\mathcal{A}_d^{-1}\mathcal{A}^s](\bar {\mathbf u})_{i,j} =-4\epsilon_2\left[\frac{1}{h_a(h_{a-1}+h_a)}[\mathcal{A}_d^{-1}\mathcal{A}^s](\bar {\mathbf u})_{i+1,j}+  \frac{1}{h_{a-1}(h_{a-1}+h_a)}[\mathcal{A}_d^{-1}\mathcal{A}^s](\bar {\mathbf u})_{i-1,j}\right.\\
    \left.+  \frac{1}{h_b(h_{b-1}+h_b)}[\mathcal{A}_d^{-1}\mathcal{A}^s](\bar {\mathbf u})_{i,j+1}+\frac{1}{h_{b-1}(h_{b-1}+h_b)}[\mathcal{A}_d^{-1}\mathcal{A}^s](\bar {\mathbf u})_{i,j-1}\right].
       \end{gathered}$}
\]
In the expression above, the output of the operator $\mathcal A^z(\bar{\mathbf{u}})_{ij}$  are at interior edge centers as shown in Figure  \ref{aa_plus_knot} (b). Hence $[\mathcal{A}_d^{-1}\mathcal{A}^s]$ will act on these edge centers with the mesh lengths corresponding to Figure \ref{mesh}. Carefully considering the mesh lengths and operations of $\mathcal{A}_d^{-1}$ at these points gives:
\[\resizebox{\textwidth}{!} 
{$     \begin{gathered}
    [\mathcal{A}^z\mathcal{A}_d^{-1}\mathcal{A}^s](\bar{\mathbf u} )_{i,j} = -4\epsilon_2\left [\frac{1}{h_a(h_{a-1}+h_a)}\frac{2h_bh_{b-1}h_a^2}{7h_a^2+4h_bh_{b-1}}\mathcal{A}^s(\bar{\mathbf u} )_{i+1,j} \right.
   \\ +\frac{1}{h_{a-1}(h_{a-1}+h_a)}\frac{2h_bh_{b-1}h_{a-1}^2}{7h_{a-1}^2+4h_bh_{b-1}}\mathcal{A}^s(\bar{\mathbf u} )_{i-1,j}+\frac{1}{h_b(h_{b-1}+h_b)}\frac{2h_ah_{a-1}h_b^2}{7h_b^2+4h_ah_{a-1}}\mathcal{A}^s(\bar{\mathbf u} )_{i,j+1}
    \\  \left.+ \frac{1}{h_{b-1}(h_{b-1}+h_b)}\frac{2h_ah_{a-1}h_{b-1}^2}{7h_{b-1}^2+4h_ah_{a-1}}\mathcal{A}^s(\bar{\mathbf u} )_{i,j-1}\right],\quad \text{if $(x_i,y_j)$ is an interior knot.}
    \end{gathered}$}
\]
Next consider
 the effect of $\mathcal{A}^s(\bar{\mathbf{u}})$ operator which has the same sparsity pattern as $\mathcal{A}^z(\bar{\mathbf{u}})$. Figure  \ref{aa_plus_knot} (c) shows the stencil of $    [\mathcal{A}^z\mathcal{A}_d^{-1}\mathcal{A}^s](\bar {\mathbf u})_{i,j} $ for an interior knot. Recall that $A^z \leq 0$, $A^s \leq 0$, and $A_d^{-1} \geq 0$, thus we have $A^zA_d^{-1}A^s\geq0$. So we only need to compare the outputs of  $[\mathcal{A}^z\mathcal{A}_d^{-1}\mathcal{A}^s](\bar {\mathbf u})_{i,j} $ and $\mathcal{A}_a^+(\bar {\mathbf u})_{i,j}$ at nonzero entries of  $\mathcal{A}_a^+(\bar {\mathbf u})_{i,j}$, i.e., the four red dots in Figure \ref{aa_plus_knot}  (a) and Figure  \ref{aa_plus_knot} (c).  

 Thus we only need coefficients of $u_{i+2,j}, u_{i-2,j}, u_{i,j+2},$ and $u_{i,j-2}$ in the final expression of $[\mathcal{A}^z\mathcal{A}_d^{-1}\mathcal{A}^s](\mathbf u)_{i,j}$,
 which are found to be   
 \begin{itemize}
     \item[] $u_{i+2,j}:\quad{}4\epsilon_2(1-\epsilon_1)\frac{1}{h_a(h_{a-1}+h_a)}\frac{2h_bh_{b-1}h_a^2}{7h_a^2+4h_bh_{b-1}}\frac{1}{h_a^2}$
     
     \item[] $u_{i-2,j}:\quad{}4\epsilon_2(1-\epsilon_1)\frac{1}{h_{a-1}(h_{a-1}+h_a)}\frac{2h_bh_{b-1}h_{a-1}^2}{7h_{a-1}^2+4h_bh_{b-1}}\frac{1}{h_{a-1}^2}$
     
     \item[] $u_{i,j+2}:\quad{}4\epsilon_2(1-\epsilon_1)\frac{1}{h_b(h_{b-1}+h_b)}\frac{2h_ah_{a-1}h_b^2}{7h_b^2+4h_ah_{a-1}}\frac{1}{h_b^2}$
     
     \item[] $u_{i,j-2}:\quad{}4\epsilon_2(1-\epsilon_1)\frac{1}{h_{b-1}(h_{b-1}+h_b)}\frac{2h_ah_{a-1}h_{b-1}^2}{7h_{b-1}^2+4h_ah_{a-1}}\frac{1}{h_{b-1}^2}$
     
     \item[]
 \end{itemize}

In order to maintain $A_a^+ \leq A^zA_d^{-1}A^s$,
by comparing to the coefficients of $u_{i+2,j}$ for $\mathcal A_a^+(\bar{\mathbf{u}})$, we obtain a mesh constraint $
    4\epsilon_2(1-\epsilon_1)\frac{2h_bh_{b-1}}{7h_a^2+4h_bh_{b-1}} \geq \frac{1}{2}.$
Similar constraints are obtained by comparing other coefficients at $u_{i,j\mp2}$ and $u_{i-2, j}$. 
Define 
$$\ell(\epsilon_1, \epsilon_2) =4\epsilon_2(1-\epsilon_1).$$ Then the following constraints are sufficient for $\mathcal A_a^+(\bar{\mathbf{u}})$ to be controlled by
$\mathcal A^z\mathcal A_d^{-1}\mathcal A^s(\bar{\mathbf{u}})$ at an interior knot:
\begin{subequations}
\label{meshconstraint-1}
\begin{equation}
    h_ah_{a-1} \geq\frac{7}{4\ell-4}\max\{h_b^2,h_{b-1}^2\}, 
   \quad h_bh_{b-1} \geq \frac{7}{4\ell-4}\max\{h_a^2,h_{a-1}^2\}.
\end{equation}

Second, we need to discuss the case when $(x_i,y_j)$ is an interior edge center. Without loss of generality, assume $(x_i, y_j)$ is an interior edge center of an edge parallel to the y-axis. Then similar to the interior knot case, the output coefficients of $[\mathcal{A}^z\mathcal{A}_d^{-1}\mathcal{A}^s](\bar{\mathbf u})_{i,j}$ at the relevant non-zero entries of $\mathcal A_a^+(\bar{\mathbf u})_{i,j}$ are:
 
 \begin{itemize}
     \item[] $u_{i+2,j}:\quad{}4\epsilon_2(1-\epsilon_1)\frac{1}{h_a(h_{a-1}+h_a)}\frac{h_a^2h_b^2}{2h_a^2+2h_b^2}\frac{1}{h_a^2}$
     
     \item[] $u_{i-2,j}:\quad{}4\epsilon_2(1-\epsilon_1)\frac{1}{h_{a-1}(h_{a-1}+h_a)}\frac{h_{a-1}^2h_b^2}{2h_{a-1}^2+2h_b^2}\frac{1}{h_{a-1}^2}$
 \end{itemize}

By comparing with coefficients of $\mathcal A_a^+(\bar{\mathbf u})_{i,j}$, we get
$\frac{h_b^2}{h_a^2+h_b^2} \geq \frac{1}{\ell}, \quad \frac{h_b^2}{h_{a-1}^2+h_b^2} \geq \frac{1}{\ell}.$
To ensure $\mathcal A_a^+(\bar{\mathbf{u}})$ is controlled by
$\mathcal A^z\mathcal A_d^{-1}\mathcal A^s(\bar{\mathbf{u}})$ at  edge centers, it suffices to have:
\begin{equation}
    min\{h_a,h_{a-1}\} \geq \sqrt{\frac{1}{\ell-1}}max\{h_b,h_{b-1}\},\quad
    min\{h_b,h_{b-1}\} \geq \sqrt{\frac{1}{\ell-1}}max\{h_a,h_{a-1}\}.
\end{equation}
\end{subequations}

Note that $\mathcal A_a^+(\bar{\mathbf{u}})_{i,j} = 0$ if $(x_i,y_j)$ is a cell center. Since $\mathcal A^z\mathcal A_d^{-1}\mathcal A^s(\bar{\mathbf{u}}) \geq 0$, there is no mesh constraint to enforce the inequality at cell centers.

\subsection{Mesh constraints for $A_d + A^z$ being an M-matrix}\label{l_calc}
Let $\mathcal B=\mathcal{A}_d + \mathcal{A}^z$. Then $\mathcal B({\mathbf 1})_{i,j} = 1$ for a boundary point $(x_i,y_j)$. For interior points, we have:
\begin{align*}
   \mathcal B({\mathbf 1})_{i,j}& = -\epsilon_1\left(\frac{1}{h_a^2} + \frac{1}{h_a^2} + \frac{1}{h_b^2}+ \frac{1}{h_b^2}\right) + \frac{2h_a^2 + 2h_b^2}{h_a^2h_b^2}=(1-\epsilon{}_1)\frac{2h_a^2+2h_b^2}{h_a^2h_b^2}, \quad\text{ cell center;}
\\  \mathcal B({\mathbf 1})_{i,j} &= -\epsilon_1\left(\frac{1}{h_b^2} + \frac{1}{h_b^2} \right) -\epsilon_2\left[\frac{4}{h_a(h_a + h_{a-1})} +\frac{4}{h_{a-1}(h_a + h_{a-1})} \right] + \frac{7h_b^2 + 4h_ah_{a-1}}{2h_ah_{a-1}h_b^2}\\
&= (1-\epsilon{}_1)\frac{2}{h_b^2} + (1-\frac{8}{7}\epsilon{}_2)\frac{7}{2h_ah_{a-1}}, \quad\text{edge center (2);}  \\    \mathcal B({\mathbf 1})_{i,j} &= -\epsilon_1\left(\frac{1}{h_a^2}  + \frac{1}{h_a^2} \right) -\epsilon_2\left[\frac{4}{h_b(h_b + h_{b-1})}
    +\frac{4}{h_{b-1}(h_b + h_{b-1})} \right]+\frac{7h_a^2 + 4h_bh_{b-1}}{2h_bh_{b-1}h_a^2}\\
&= (1-\epsilon{}_1)\frac{2}{h_a^2} + (1-\frac{8}{7}\epsilon{}_2)\frac{7}{2h_bh_{b-1}},\quad\text{edge center (3);}\\ 
 \mathcal B({\mathbf 1})_{i,j} &= -\epsilon_2\left[\frac{4}{h_a(h_a + h_{a-1})} +\frac{4}{h_{a-1}(h_a + h_{a-1})}+ \frac{4}{h_b(h_b + h_{b-1})} +\frac{4}{h_{b-1}(h_b + h_{b-1})} \right]\\
 &+\frac{7h_bh_{b-1}+ 7h_ah_{a-1}}{2h_ah_{a-1}h_bh_{b-1}}= (1-\frac{8}{7}\epsilon{}_2)\frac{7h_bh_{b-1}+ 7h_ah_{a-1}}{2h_ah_{a-1}h_bh_{b-1}},\quad \text{interior knot.}
    \end{align*}

Notice that larger values of  $\ell$ give better mesh constraints in \eqref{meshconstraint-1}. And we have $\sup_{0<\epsilon_1, \epsilon_2\leq 1} \ell(\epsilon_1, \epsilon_2)=\sup_{0<\epsilon_1, \epsilon_2\leq 1} 4\epsilon_2(1-\epsilon_1) =4.$
In order to apply Theorem \ref{rowsumcondition-thm} for  $A_d + A^z$ be an M-matrix,  we need $ [\mathcal{A}_d + \mathcal{A}^z]({\mathbf 1})\geq 0$. This is true if and only if $\epsilon_1 \leq 1$ and $\epsilon_2 \leq \frac{7}{8}$, which only give $\sup_{0<\epsilon_1 \leq 1, 0<\epsilon_2 \leq \frac{7}{8}}\ell(\epsilon_1, \epsilon_2)= 3.5$.

\subsection{Improved mesh constraints by the relaxed Lorenz's condition}
To get a better mesh constraint, the constraint on $\epsilon_2$ can be relaxed so that the value of $\ell(\epsilon_1, \epsilon_2)$ can be improved. 
One observation from Section \ref{azadas} is that the value of $\mathcal A_d(\bar{\mathbf u})_{i,j}$ for $(x_i,y_j)$ being a knot is not used for verifying $A_a^+ \leq A^zA_d^{-1}A^s$ (for both interior knots and edge centers).  To this end, we define a new diagonal matrix $A_{d^*}$, which is different from $A_d$ only at the interior knots.
\begin{align*}
           \mathcal A_{d^*}( \bar{\mathbf u} )_{i,j} & = u_{i,j }= \mathcal A_{d}( \bar{\mathbf u} )_{i,j} , \quad \text{if $(x_i,y_j)$ is a boundary point;}\\
           \mathcal A_{d^*}( \bar{\mathbf u} )_{i,j} & = \frac{2h_a^2 + 2h_b^2}{h_a^2h_b^2}u_{i,j}= \mathcal A_{d}( \bar{\mathbf u} )_{i,j} , \quad \text{if $(x_i,y_j)$ is a cell center;}\\
  \mathcal A_{d^*}( \bar{\mathbf u} )_{i,j}& = \frac{7h_b^2 + 4h_ah_{a-1}}{2h_ah_{a-1}h_b^2}u_{i,j}= \mathcal A_{d}( \bar{\mathbf u} )_{i,j} ,\quad\text{ edge center (2);}\\
   \mathcal A_{d^*}( \bar{\mathbf u} )_{i,j} &= \frac{7h_a^2 + 4h_bh_{b-1}}{2h_bh_{b-1}h_a^2}u_{i,j}= \mathcal A_{d}( \bar{\mathbf u} )_{i,j} ,  \quad\text{edge center (3);}
\\ 
   \mathcal A_{d^*}( \bar{\mathbf u} )_{i,j} & =\frac{8h_bh_{b-1}+ 8h_ah_{a-1}}{2h_ah_{a-1}h_bh_{b-1}}u_{i,j}\neq \mathcal A_{d}( \bar{\mathbf u} )_{i,j} , \quad \text{if $(x_i,y_j)$ is an interior  knot.}
    \end{align*}    
Since the values of $\mathcal A_d(\bar{\mathbf u})_{i,j}$ for $(x_i,y_j)$ being a knot is not involved in Section  \ref{azadas}, the same discussion in Section  \ref{azadas} also holds for verifying $A_a^+ \leq A^zA_{d^*}^{-1}A^s$. Namely, under mesh constraints \eqref{meshconstraint-1}, we also have $A_a^+ \leq A^zA_{d^*}^{-1}A^s$.

Let $B^*=A_{d^*} + A^z$, then the row sums of $B^*$ are:
\begin{align*}
    \mathcal B^*({\mathbf 1})_{i,j}& = 1, \quad  \text{if $(x_i,y_j)$ is a boundary point;} \\
    \mathcal B^*({\mathbf 1})_{i,j}& = -\epsilon_1\left(\frac{1}{h_a^2} + \frac{1}{h_a^2} + \frac{1}{h_b^2}+ \frac{1}{h_b^2}\right) + \frac{2h_a^2 + 2h_b^2}{h_a^2h_b^2}=(1-\epsilon{}_1)\frac{2h_a^2+2h_b^2}{h_a^2h_b^2},\text{cell center;}
\\  \mathcal B^*({\mathbf 1})_{i,j} &= -\epsilon_1\left(\frac{1}{h_b^2} + \frac{1}{h_b^2} \right) -\epsilon_2\left[\frac{4}{h_a(h_a + h_{a-1})} +\frac{4}{h_{a-1}(h_a + h_{a-1})} \right] + \frac{7h_b^2 + 4h_ah_{a-1}}{2h_ah_{a-1}h_b^2}\\
&= (1-\epsilon{}_1)\frac{2}{h_b^2} + (1-\frac{8}{7}\epsilon{}_2)\frac{7}{2h_ah_{a-1}},  \quad\text{edge center (2);}\\
    \mathcal B^*({\mathbf 1})_{i,j} &= -\epsilon_1\left(\frac{1}{h_a^2}  + \frac{1}{h_a^2} \right) -\epsilon_2\left[\frac{4}{h_b(h_b + h_{b-1})}
    +\frac{4}{h_{b-1}(h_b + h_{b-1})} \right]+\frac{7h_a^2 + 4h_bh_{b-1}}{2h_bh_{b-1}h_a^2}\\
&= (1-\epsilon{}_1)\frac{2}{h_a^2} + (1-\frac{8}{7}\epsilon{}_2)\frac{7}{2h_bh_{b-1}}, \quad\text{edge center (3);}
\\ \mathcal B^*({\mathbf 1})_{i,j} &= -\epsilon_2\left[\frac{4}{h_a(h_a + h_{a-1})} +\frac{4}{h_{a-1}(h_a + h_{a-1})}+ \frac{4}{h_b(h_b + h_{b-1})} +\frac{4}{h_{b-1}(h_b + h_{b-1})} \right]\\
&+\frac{8h_bh_{b-1}+ 8h_ah_{a-1}}{2h_ah_{a-1}h_bh_{b-1}}
    = (1-\epsilon{}_2)\frac{8h_bh_{b-1}+ 8h_ah_{a-1}}{2h_ah_{a-1}h_bh_{b-1}}, \quad\text{ interior knot.}
    \end{align*}

Now $ [\mathcal{A}_{d^*} + \mathcal{A}^z]({\mathbf 1})_{i,j}\geq 0$ at cell centers and knots is true if and only if $\epsilon_1 \leq 1$ and $\epsilon_2 \leq 1$. 

Next, we will show that the mesh constraints \eqref{meshconstraint-1} with $0<\epsilon_1\leq \frac12$ and $\epsilon_2=1$ are sufficient to ensure 
$ [\mathcal{A}_{d^*} + \mathcal{A}^z]({\mathbf 1})_{i,j}\geq 0$ at edge centers. 
We have $0<\epsilon_1\leq \frac12, \epsilon_2=1\Longrightarrow  2\leq \ell<4\Longrightarrow \frac{7}{4\ell -4}\geq \frac{1}{\ell}.$ The mesh constraints \eqref{meshconstraint-1} imply that $h_a h_{a-1} \geq \frac{7}{4\ell -4}h_b^2\geq \frac{1}{\ell} h_b^2$, thus 
 \begin{align*}
& (1-\epsilon{}_1)\frac{2}{h_b^2} + (1-\frac{8}{7}\epsilon{}_2)\frac{7}{2h_ah_{a-1}}=(1-\epsilon{}_1)\frac{2}{h_b^2} -\frac{1}{2}\frac{1}{h_ah_{a-1}}
= \frac12 \left[ \frac{\ell}{h_b^2}-\frac{1}{h_a h_{a-1}}\right]\geq 0.
 \end{align*}
 Similarly, $(1-\epsilon{}_1)\frac{2}{h_a^2} + (1-\frac{8}{7}\epsilon{}_2)\frac{7}{2h_bh_{b-1}}\geq 0$ also holds. 
 
 Therefore, for constants $0<\epsilon_1\leq \frac12$ and $\epsilon_2=1$, we have $[\mathcal{A}_{d^*} + \mathcal{A}^z]({\mathbf 1})\geq \mathbf 0$.
 In particular, we have a larger $\ell$ compared to constraints from $\mathbf A_d$. 
 \subsection{The main result} \label{main_constraints}
 
We have shown that for two constants $0<\epsilon_1 \leq \frac12$ and $\epsilon_2=1$, under mesh constraints \eqref{meshconstraint-1}, 
the matrices $A_{d^*}$, $A^z$, $A^s$ constructed above satisfy
 $(A_{d^*}+A^z) \mathbf 1\geq \mathbf 0$ and  $A_a^+ \leq A^zA_{d^*}^{-1}A^s$. 

For any fixed $\epsilon_1>0$ and $\epsilon_2=1$, $A^z$ also has the same sparsity pattern as $A$. Thus if $\ell$ in \eqref{meshconstraint-1} 
is replaced by $\sup_{0<\epsilon_1\leq \frac12, \epsilon_2=1} \ell(\epsilon_1, \epsilon_2)=4,$ 
 Theorem \ref{newthm3-r} still applies to conclude that $A^{-1}\geq 0$.

\begin{theorem}
The $Q^2$ spectral element method \eqref{q2-scheme-nonuniform}  has a monotone matrix $\bar L_h$ thus satisfies discrete maximum principle under the following mesh constraints:
 \begin{equation}
 \label{meshconstraint-2}
     \begin{gathered}
    h_ah_{a-1} \geq \frac{7}{12}max\{h_b^2,h_{b-1}^2\}, \quad h_bh_{b-1} \geq \frac{7}{12}max\{h_a^2,h_{a-1}^2\}, \\ min\{h_a,h_{a-1}\} \geq \sqrt{\frac{1}{3}}max\{h_b,h_{b-1}\}, \quad min\{h_b,h_{b-1}\} \geq \sqrt{\frac{1}{3}}max\{h_a,h_{a-1},\}
    \end{gathered}
\end{equation}
where $h_a, h_{a-1}$ are mesh sizes for $x$-axis and  $h_b, h_{b-1}$ are mesh sizes for $y$-variable in four adjacent rectangular cells  as shown in Figure \ref{mesh}.
\end{theorem}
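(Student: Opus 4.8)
The plan is to assemble the final theorem directly from the machinery developed in Sections~\ref{azadas}--\ref{main_constraints}: all the genuinely new mesh-geometric work has already been carried out in Section~\ref{azadas} (verification of \eqref{cond2-r}) and in Section~\ref{l_calc} together with the relaxed-Lorenz bookkeeping (verification of \eqref{cond1-r} via $A_{d^*}+A^z$ being a nonsingular M-matrix). The theorem statement is just the specialization of the sufficient conditions to the concrete constants $\epsilon_1 = \tfrac12$ (so that the threshold ratio $\tfrac{7}{4\ell-4}$ dominates $\tfrac1\ell$, as required for the edge-center row-sum estimate) and $\epsilon_2 = 1$, for which $\ell(\epsilon_1,\epsilon_2) = 4\epsilon_2(1-\epsilon_1) = 2$; but since larger $\ell$ only relaxes the constraints and $A^z$ has the full sparsity pattern of $A_a^-$ for every $\epsilon_1,\epsilon_2\in(0,1]$, one is free to replace $\ell$ throughout \eqref{meshconstraint-1} by its supremum value $4$, which turns \eqref{meshconstraint-1} into precisely \eqref{meshconstraint-2} (since $\tfrac{7}{4\cdot4-4} = \tfrac{7}{12}$ and $\sqrt{\tfrac{1}{4-1}} = \sqrt{1/3}$).

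The key steps, in order, are as follows. First, I record the decomposition $A = A_d + A_a^+ + A^z + A^s$ from Section~\ref{q2_scheme}, with $A^z$ depending on $\epsilon_1,\epsilon_2$, noting $A^s = A_a^- - A^z \le 0$ and that $A^z$ shares the sparsity pattern of $A_a^-$ for $\epsilon_1,\epsilon_2\in(0,1]$, so condition (3) of Theorem~\ref{newthm3-r} holds (here $c\equiv 0\ge 0$, and $A^z$ has the same sparsity as $A_a^-$). Second, I invoke the computation of Section~\ref{azadas}: with the modified diagonal $A_{d^*}$ (which differs from $A_d$ only at interior knots, where it equals $\tfrac{8h_bh_{b-1}+8h_ah_{a-1}}{2h_ah_{a-1}h_bh_{b-1}}$) the inequality $A_a^+ \le A^z A_{d^*}^{-1} A^s$ is equivalent — after extracting the coefficients of $u_{i\pm2,j}$, $u_{i,j\pm2}$ at knots and $u_{i\pm2,j}$ at edge-center~(2) and symmetrically at edge-center~(3) — to the system \eqref{meshconstraint-1} with $\ell = \ell(\epsilon_1,\epsilon_2)$, giving condition (2) of Theorem~\ref{newthm3-r}. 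Third, I invoke Section~\ref{l_calc} (relaxed version): taking $\epsilon_1 = \tfrac12$, $\epsilon_2 = 1$ makes $[\mathcal A_{d^*}+\mathcal A^z](\mathbf 1)_{i,j}\ge 0$ at every interior point — at cell centers it equals $(1-\epsilon_1)\tfrac{2h_a^2+2h_b^2}{h_a^2h_b^2} > 0$, at interior knots $(1-\epsilon_2)\tfrac{8h_bh_{b-1}+8h_ah_{a-1}}{2h_ah_{a-1}h_bh_{b-1}} = 0$, and at edge centers the identity $(1-\epsilon_1)\tfrac{2}{h_b^2} + (1-\tfrac87\epsilon_2)\tfrac{7}{2h_ah_{a-1}} = \tfrac12(\tfrac{\ell}{h_b^2} - \tfrac{1}{h_ah_{a-1}})$ is nonnegative because \eqref{meshconstraint-1} forces $h_ah_{a-1}\ge \tfrac{7}{4\ell-4}h_b^2 \ge \tfrac1\ell h_b^2$ (using $2\le\ell<4$) — and it is not identically zero since it is positive at cell centers (the mesh contains at least one cell center because $n$ is odd); this is condition (1) of Theorem~\ref{newthm3-r}. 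Fourth, applying Theorem~\ref{newthm3-r} (equivalently Theorem~\ref{thm3-r} with $\mathbf e = \mathbf 1$, using $A\mathbf 1 \ge 0$ from Section~\ref{sec:main} and the connectivity via $A^z$) yields $\bar L_h^{-1} = A^{-1} \ge 0$. Finally, since $(\bar L_h\mathbf 1)_{i,j} = 1$ on $\partial\Omega$ and $=0$ in the interior, Theorem~\ref{theorem-fullandsmallmatrix} upgrades monotonicity to the discrete maximum principle; and replacing $\ell$ by $4$ in \eqref{meshconstraint-1} gives \eqref{meshconstraint-2}.

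The one place that needs a little care — and which I regard as the main (mild) obstacle — is the step where $\ell$ is bumped from its attained value $2$ (at $\epsilon_1 = \tfrac12$) up to its supremum $4$ in the final constraint. The bump is legitimate only because \emph{both} conditions (1) and (2) behave monotonically: condition (2) is genuinely relaxed when $\ell$ grows, while condition (1) was verified \emph{for every} admissible pair with $0<\epsilon_1\le\tfrac12$, $\epsilon_2=1$ under the same constraints \eqref{meshconstraint-1} (its edge-center estimate used only $2\le\ell<4$ and $\tfrac{7}{4\ell-4}\ge\tfrac1\ell$, both of which hold on that whole range). So for any mesh satisfying \eqref{meshconstraint-2}, one chooses $\epsilon_1$ close enough to $0$ that $\ell(\epsilon_1,1)$ exceeds the (finite) ratios demanded by that particular mesh in \eqref{meshconstraint-1}, and then steps 1--4 apply verbatim for that $\epsilon_1$. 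I would state this passage to the limit explicitly rather than leaving it implicit in ``replace $\ell$ by $4$'', since $\ell = 4$ itself is not attained.
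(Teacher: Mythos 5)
Your proposal follows the paper's proof essentially verbatim: the same decomposition $A = A_{d^*} + A_a^+ + A^z + A^s$ with the modified diagonal $A_{d^*}$ at interior knots, the same verification of conditions (1)--(3) of Theorem \ref{newthm3-r} under \eqref{meshconstraint-1}, and the same passage from $\ell(\epsilon_1,1)$ with $0<\epsilon_1\le\tfrac12$ to the supremum value $\ell=4$ yielding \eqref{meshconstraint-2}. Your explicit treatment of the non-attained supremum (choosing $\epsilon_1$ small enough for each fixed mesh) is a useful clarification of a step the paper leaves implicit, though it still tacitly assumes the mesh satisfies \eqref{meshconstraint-2} with some slack; the borderline equality case would need an extra continuity argument in either version.
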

 
\begin{rmk}
The following global constraint is sufficient to ensure \eqref{meshconstraint-2}:
\begin{gather}
    \frac{25}{32} \leq  \frac{h_m}{h_n} \leq \frac{32}{25},
\end{gather}
where $h_m$ and $h_n$ are any two grid spacings in a non-uniform grid generated from a non-uniform rectangular mesh for $Q^2$ elements. 
\end{rmk}

\begin{rmk}
Though the mesh constraints above may not be sharp, similar constraints are necessary for monotonicity, as will be shown in numerical tests in the next section.
\end{rmk}
\begin{rmk}
For $Q^1$ finite element method solving $-\Delta u=f$ to satisfy discrete maximum principle on non-uniform  rectangular meshes \cite{christie1984maximum},  the mesh constraints are 
\begin{equation}
    h_ah_{a-1} \geq \frac{1}{2}max\{h_b^2,h_{b-1}^2\} \quad h_bh_{b-1} \geq \frac{1}{2}max\{h_a^2,h_{a-1}^2\}.
\end{equation}
\end{rmk}

\section{Numerical Tests}
\label{sec:5}
\label{sec-test} 
 
\subsection{Accuracy tests}
We show some accuracy tests of the $Q^2$ spectral element method for solving $-\Delta u=f$ on a square $(0,1)\times{}(0,1)$ with Dirichlet boundary conditions.
This scheme is fourth order accurate
in $\ell^2$-norm over quadrature points on uniform meshes \cite{li2019fourth}.
 \textcolor{black}{On a quasi-uniform mesh,  we test the error in $\ell^\infty$-norm to show that this is indeed a high order accurate scheme, which is at least third order accurate. We remark that $Q^2$ spectral element method as a finite difference scheme in $\ell^\infty$ norm is not fourth order accurate even on a uniform mesh, due to the singularity in Green's function in multiple dimensions, see numerical results in \cite{li2019fourth} and references therein. }

Quasi-uniform meshes were generated by setting each pair of consecutive finite element cells along the axis to have a fixed ratio $\frac{h_k}{h_{k-1}} = 1.01$.   The scheme is tested for the following very smooth solutions:
\begin{enumerate}
\item The Laplace equation $-\Delta u=0$ with Dirichlet boundary conditions and  $u(x,y) = log((x+1)^2 + (y+1)^2) + sin(y)e^x$.
\item Poisson equation $-\Delta u=f$ with homogeneous Dirichlet boundary condition:
\begin{equation}\label{numtestpoisson1}
\begin{gathered}
f(x,y) = 13\pi^2sin(3\pi{}y)sin(2\pi{}x) + 2y(1-y) + 2x(1-x) \\ u(x,y) = sin(3\pi{}y)sin(2\pi{}x) + xy(1-x)(1-y)
\end{gathered}
\end{equation}
\item Poisson equation $-\Delta u=f$ with nonhomogeneous Dirichlet boundary condition:
\begin{equation}\label{numtestpoisson3}
\begin{gathered}
f = 74\pi{}^2cos(5\pi{}x)cos(7\pi{}y) - 8 \\
u = cos(5\pi{}x)cos(7\pi{}y) + x^2 + y^2
\end{gathered}
\end{equation} 
\end{enumerate}

The  errors of $Q^2$ spectral element method on quasi uniform rectangular meshes are listed in  Table \ref{table-q2}.

%

\begin{table}[htbp]
\centering
\caption{Accuracy test on quasi-uniform meshes.}
\renewcommand{\arraystretch}{2}
\resizebox{0.7\textwidth}{!}{
\begin{tabular}{|c|c|c|c| }
\hline
\multirow{2}{*}{Finite Difference Grid} & 
\multirow{2}{*}{ Ratio $\frac{h_i}{h_{i-1}}$} & 
\multicolumn{2}{c|}{$Q^2$ spectral element method}  
 \\ 
\cline{3-4}
 & &  $l^\infty$ error & order   
   \\
\hline
\multicolumn{4}{|c|}{test on $-\Delta u=0$}  \\
\hline
$7\times 7$  & 
1.01 & 
 2.66E-5     &     -              \\
\hline
$15\times 15$ & 
1.01 & 
    1.97E-6     &     3.74               \\
\hline
$31\times 31$ &
1.01 & 
    1.54E-7     &     3.67               \\
\hline
$63\times 63$ &
1.01 & 
    1.37E-8     &     3.49                \\ \hline
\multicolumn{4}{|c|}{test on \eqref{numtestpoisson1}}  \\
\hline
$7\times 7$  & 
1.01 & 
        4.92E-2     &     -                   

\\
\hline
$15\times 15$ & 
1.01 & 
        3.19E-3     &     3.94              \\
\hline
$31\times 31$ &
1.01 & 
    2.29E-4     &     3.79             \\
\hline
$63\times 63$ &
1.01 & 
    1.80E-5     &     3.67              \\ \hline
\multicolumn{4}{|c|}{test on \eqref{numtestpoisson3}}  \\
\hline
$7\times 7$  & 
1.01 & 
        1.20E-0     &     -               \\
\hline
$15\times 15$ & 
1.01 & 
        1.03E-1     &     3.54            \\
\hline
$31\times 31$ &
1.01 & 
9.10E-3     &     3.50             \\
\hline
$63\times 63$ &
1.01 & 
 9.64E-4     &     3.23             \\ \hline
\end{tabular}}
\label{table-q2}
\end{table}

\subsection{Necessity of Mesh Constraints}
Even though the mesh constraints derived in the previous section are only sufficient conditions for monotonicity,
in practice a mesh constraint is still necessary for the inverse positivity to hold. 
Consider a non-uniform $Q^2$ mesh with $5\times{} 5$ cells on the domain $[0,1]\times{}[0,1]$, which has a  $9\times{}9$ grid for the interior of the domain. Let the mesh on both axes be the same and let the four outer-most cells for each dimension be identical with length $2h$. Then the middle cell has size $2h^\prime{}\times 2h^\prime$ with $h^\prime=\frac12-2h$. Let the ratio $h'/ h$ increase gradually from $h'/ h = 1$ (a uniform mesh) until the minimum value of the inverse of the matrix becomes negative. Increasing by values of $0.05$, we obtain the first negative entry of $\bar L_h^{-1}$ at $h'/h= 5.35$ with $h = 0.0535$ and $h^\prime{}=0.2861$,
and such a mesh is shown in Figure \ref{inverse_entries} (a). Figure \ref{inverse_entries} (b) shows how the smallest entry of $\bar L_h^{-1}$ decreases as $h'/ h$ increases.

   \begin{figure}[htbp]
 \subfigure[A non-uniform mesh with $5\times 5$ cells on which the $Q^2$ spectral element method is no longer monotone. The minimum value of $\bar L_h^{-1}$  is $-6.14E-8$.]{\includegraphics[scale=0.6]{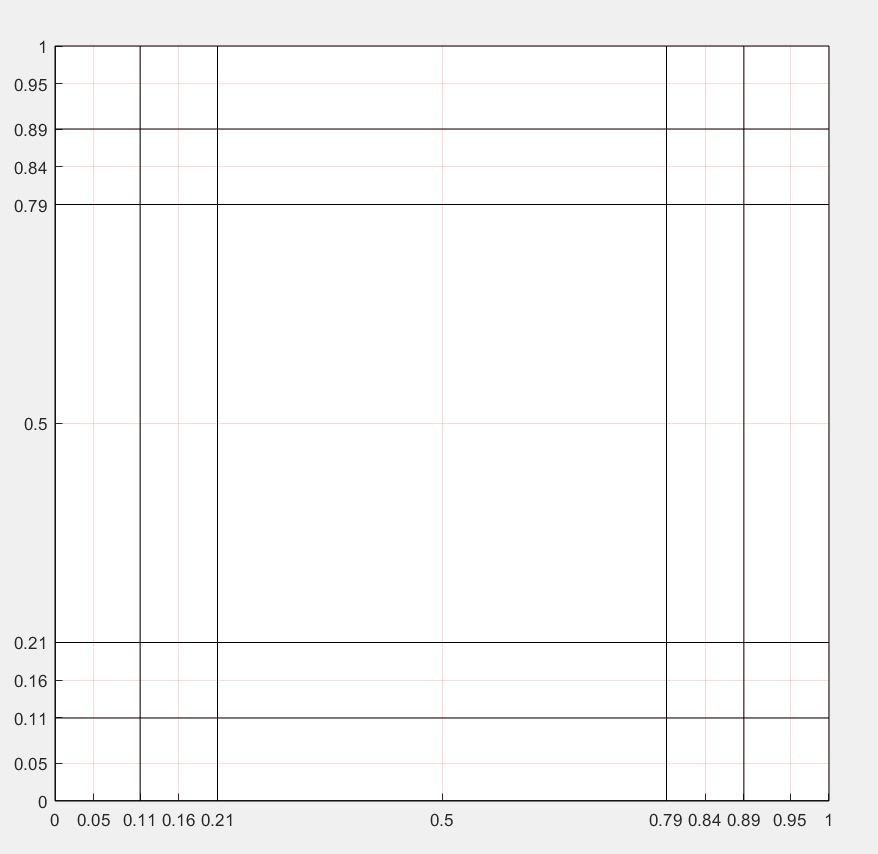} } \\
 \subfigure[A plot of the minimum value of $\bar L_h^{-1}$ as $h'/h$ increases.  ]{\includegraphics[scale=0.7]{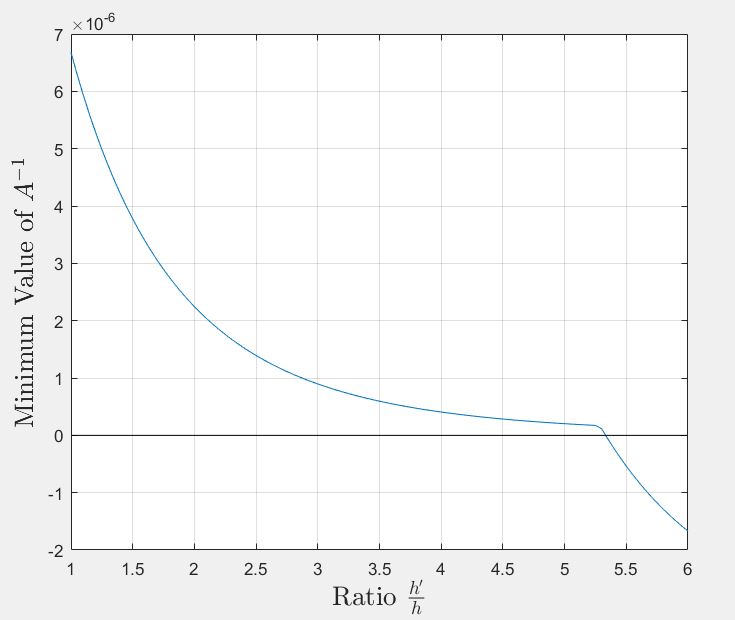}}
 \caption{Necessity of mesh constraints for inverse positivity $\bar L_h^{-1}\geq 0$ where $\bar L_h$ is the matrix in $Q^2$ spectral element method on non-uniform meshes. }
 \label{inverse_entries}
 \end{figure}

 \section{Concluding remarks}
 \label{sec:remark}
By verifying a relaxed  Lorenz's condition, we have
discussed suitable mesh constraints, under which the   $Q^2$ spectral element method on quasi-uniform meshes 
 is monotone.  Even though the derived mesh constraints may not be sharp, a similar constraint is necessary for the monotonicity to hold. 
\begin{appendix}
    \section{Appendix: M-matrices}
 Nonsingular M-matrices are inverse-positive matrices. There are many equivalent definitions or characterizations of M-matrices, see 
\cite{plemmons1977m}. 
The following is a convenient sufficient but not necessary characterization of nonsingular M-matrices \cite{li2019monotonicity}:
\begin{theorem}
\label{rowsumcondition-thm}
For a real square matrix $A$  with positive diagonal entries and non-positive off-diagonal entries, $A$ is a nonsingular M-matrix if  all the row sums of $A$ are non-negative and at least one row sum is positive. 
\end{theorem}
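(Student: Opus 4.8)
The plan is to realize $A$ as a shifted nonnegative matrix and then show that the shift strictly exceeds the spectral radius, which is exactly one of the equivalent characterizations of a nonsingular M-matrix. I would write $A = sI - B$ with $s = \max_i a_{ii} > 0$ and $B = sI - A$. Since $A$ is a Z-matrix (nonpositive off-diagonal entries) with $a_{ii} \le s$, every entry of $B$ is nonnegative: $B_{ij} = -a_{ij} \ge 0$ for $i \ne j$ and $B_{ii} = s - a_{ii} \ge 0$. The row-sum hypothesis $(A\mathbf 1)_i \ge 0$ translates into $(B\mathbf 1)_i = s - (A\mathbf 1)_i \le s$, with at least one row of $B$ summing to strictly less than $s$. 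Because $B \ge 0$, proving that $A$ is a nonsingular M-matrix with $A^{-1}\ge 0$ is precisely proving $\rho(B) < s$.

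First I would record the weak bound $\rho(B) \le \max_i (B\mathbf 1)_i \le s$, valid for any nonnegative matrix; this already shows $A$ is an M-matrix. The remaining and more delicate point is the strict inequality $\rho(B) < s$, equivalently that $s$ is not an eigenvalue of $B$, equivalently that $A$ is nonsingular. I would obtain this from the classical diagonal-dominance argument: supposing $A\mathbf x = \mathbf 0$ for some $\mathbf x \ne \mathbf 0$, choose $k$ with $|x_k| = \max_i |x_i| =: m > 0$ and use $a_{kk} = (A\mathbf 1)_k + \sum_{j\ne k}(-a_{kj})$ together with $-a_{kj}\ge 0$ to produce the chain
\[
a_{kk}\, m = \Bigl|\sum_{j\ne k} a_{kj} x_j\Bigr| \le \sum_{j\ne k}(-a_{kj})|x_j| \le \Bigl(\sum_{j\ne k}(-a_{kj})\Bigr) m \le a_{kk}\, m,
\]
which forces equality throughout. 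Equality then propagates: each neighbor $j$ of $k$ (with $a_{kj}\ne 0$) must itself satisfy $|x_j| = m$ and have vanishing row sum, and I would iterate this along the nonzero off-diagonal adjacency pattern. Once the iteration reaches a row with strictly positive row sum, the equality is violated, so no nonzero null vector exists; hence $A$ is nonsingular. Since $\rho(B)$ is itself an eigenvalue of the nonnegative matrix $B$, nonsingularity of $A = sI - B$ rules out $\rho(B)=s$, and combined with $\rho(B)\le s$ this yields $\rho(B) < s$ as desired.

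The main obstacle is exactly this propagation step: a single strictly dominant row controls the whole matrix only when that row is reachable from every index through the off-diagonal graph, so the argument rests on the irreducibility of $A$ (more sharply, on every index being able to reach a strictly diagonally dominant row). This connectivity cannot simply be omitted — a block-diagonal matrix consisting of one singular, weakly dominant $2\times2$ block together with a single strictly dominant row satisfies all the stated hypotheses yet is singular — so I would either invoke irreducibility directly or exploit the weakly-chained diagonal dominance that holds for the connected finite-difference stencils to which this criterion is applied. Alternatively, the same strictness can be read off from the sharp Perron–Frobenius bound for irreducible nonnegative matrices, namely that $\rho(B)$ equals the maximal row sum only when all row sums coincide, which again fails here because at least one row sum of $B$ is strictly below $s$.
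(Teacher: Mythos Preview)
The paper does not actually prove this theorem; it merely states it in the appendix and cites \cite{li2019monotonicity} as the source, so there is no in-paper argument to compare against.

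Your analysis is correct, and in the course of it you have identified a genuine defect in the \emph{statement} as printed. The counterexample you sketch is valid: the block-diagonal matrix
\[
A = \begin{pmatrix} 1 & -1 & 0 \\ -1 & 1 & 0 \\ 0 & 0 & 1 \end{pmatrix}
\]
has positive diagonal entries, nonpositive off-diagonal entries, nonnegative row sums with the last row sum strictly positive, yet is singular. Thus the theorem is false without an additional hypothesis such as irreducibility or, more sharply, weakly chained diagonal dominance (every row with zero row sum must reach, through the directed graph of nonzero off-diagonal entries, a row with positive row sum). Your propagation argument is exactly the standard proof under that extra hypothesis, and that hypothesis does hold in every application the paper makes of this result: the matrices $A_d+A^z$ and $A_{d^*}+A^z$ have boundary rows with row sum $1$, and every interior row reaches a boundary row through the stencil. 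So your proof strategy is sound once the missing connectivity assumption is supplied, and your observation that it cannot simply be omitted is correct.
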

 By condition $K_{35}$ in \cite{plemmons1977m}, a sufficient and necessary characterization  is,
 \begin{theorem}
\label{rowsumcondition-thm2}
 For a real square matrix $A$  with positive diagonal entries and non-positive off-diagonal entries, $A$ is a nonsingular M-matrix if  and only if that there exists a positive diagonal matrix 
 $D$ such that $AD$ has all
positive row sums.
 \end{theorem}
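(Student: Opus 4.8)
The plan is to establish the two implications separately, using as the working definition of a nonsingular M-matrix the splitting $A = sI - B$ with $B \geq 0$ and $s > \rho(B)$, where $\rho(\cdot)$ denotes the spectral radius. The sign hypotheses on $A$ (positive diagonal, non-positive off-diagonal entries) make $A$ a Z-matrix, so such a splitting with $B \geq 0$ always exists: take $s = \max_i a_{ii} > 0$ and $B = sI - A$, whose diagonal entries $s - a_{ii} \geq 0$ and off-diagonal entries $-a_{ij} \geq 0$ are all nonnegative.

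For the \emph{necessity} direction (if $A$ is a nonsingular M-matrix, then a suitable $D$ exists), I would invoke inverse positivity, $A^{-1} \geq 0$. Set $\mathbf d = A^{-1}\mathbf 1$, with $\mathbf 1$ the all-ones vector. Since $A^{-1}$ is nonsingular and nonnegative, no row of $A^{-1}$ can be identically zero, so each component of $\mathbf d$ is a sum of nonnegative numbers that is strictly positive; hence $\mathbf d > 0$ and $D := \operatorname{diag}(\mathbf d)$ is a positive diagonal matrix. The row sums of $AD$ are the components of $AD\mathbf 1 = A\mathbf d = A A^{-1}\mathbf 1 = \mathbf 1 > 0$, which is exactly the claimed conclusion.

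For the \emph{sufficiency} direction, suppose $D$ is a positive diagonal matrix for which $AD$ has all positive row sums, and set $\mathbf d = D\mathbf 1 > 0$. The row-sum hypothesis reads $(AD\mathbf 1)_i = (A\mathbf d)_i > 0$ for every $i$, i.e. $A\mathbf d > 0$. Writing $A = sI - B$ as above, this becomes $B\mathbf d < s\mathbf d$ componentwise. The substantive step is to upgrade this strict inequality for the single vector $\mathbf d$ into the spectral bound $\rho(B) < s$. I would do this by a Perron--Frobenius left-eigenvector argument: since $B \geq 0$, the transpose $B^T$ has a nonnegative eigenvector $\mathbf y \geq 0$, $\mathbf y \neq \mathbf 0$, with $B^T\mathbf y = \rho(B)\mathbf y$. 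Pairing $\mathbf y$ with the strictly positive vector $s\mathbf d - B\mathbf d$ gives $\mathbf y^T(s\mathbf d - B\mathbf d) > 0$, because $\mathbf y$ has at least one positive component and every component of $s\mathbf d - B\mathbf d$ is positive; meanwhile $\mathbf y^T B\mathbf d = \rho(B)\mathbf y^T\mathbf d$ and $\mathbf y^T\mathbf d > 0$ since $\mathbf d > 0$. Combining yields $(s - \rho(B))\mathbf y^T\mathbf d > 0$, hence $\rho(B) < s$, so $A$ is a nonsingular M-matrix by definition.

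The two explicit constructions ($\mathbf d = A^{-1}\mathbf 1$ and $\mathbf d = D\mathbf 1$) are routine. The one step requiring genuine care is the spectral-radius bound, where I must track strict versus non-strict inequalities and remember that the Perron eigenvector of $B^T$ is only guaranteed nonnegative, not strictly positive, when $B$ is reducible. This is precisely why the argument pairs a merely nonnegative $\mathbf y$ against a strictly positive $s\mathbf d - B\mathbf d$, so that $\mathbf y \neq \mathbf 0$ still forces the strict inequality that closes the proof.
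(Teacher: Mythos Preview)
Your proof is correct. Both directions are handled carefully: the necessity direction via $\mathbf d = A^{-1}\mathbf 1$ is standard and cleanly executed, and in the sufficiency direction your Perron--Frobenius pairing argument correctly handles the reducible case by pairing a merely nonnegative left eigenvector $\mathbf y$ against the strictly positive vector $s\mathbf d - B\mathbf d$.

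As for comparison with the paper: the paper does not actually supply a proof of this theorem. It simply states the result as ``condition $K_{35}$'' from Plemmons' survey \cite{plemmons1977m} and moves on. So your self-contained argument goes well beyond what the paper offers. The trade-off is that the paper treats this as a known classical fact to be cited, whereas you have reconstructed a direct proof from the Perron--Frobenius theorem and inverse positivity; your version is more informative for a reader who does not have the reference at hand, at the cost of a few extra lines.
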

\begin{rmk}
Non-negative row sum is not a necessary condition for M-matrices. For instance, the following matrix $A$ is an M-matrix by Theorem \ref{rowsumcondition-thm2}:
  $$A = \begin{bmatrix}
                       10  & 0 & 0\\
                       -10 & 2 & -10\\
                       0   & 0  & 10 \end{bmatrix} 
,D =
 \begin{bmatrix}
                     0.1 &   0 & 0\\
                       0 & 2   & 0\\
                       0 &   0 & 0.1 \end{bmatrix}
,AD =
 \begin{bmatrix}
                     1   &   0 & 0\\
                      -1 &  4  & -1\\
                       0 &   0 & 1 \end{bmatrix}.
$$
\end{rmk}
 
\end{appendix}

\section*{Acknowledgments}

\bibliographystyle{plain}
\bibliography{reference.bib}
\end{document}